\renewcommand{\notin}{\mathop{\not\kern0.05em\smallin}}
\newcommand{\longto}{\longrightarrow}
\newcommand{\Ktheory}{\xspace{$\Kup$-theory}\xspace}
\DeclareMathOperator{\image}{im}
\renewcommand{\counit}{\mathrm{counit}}
\renewcommand{\unit}{\mathrm{unit}}
\newcommand{\Subfin}{\Sub_{\fin}}
\newcommand{\jlowersharpL}{\jlowersharp^{\Lup}}
\newcommand{\iplowersharp}{i_{p,\sharp}}
\newcommand{\Zren}{\Zcal\ensuremath{\textup{-ren}}}
\DeclareMathOperator{\cont}{cont}
\newcommand{\Lcont}{L^{\cont}}
\newcommand{\Kcont}{\Kup^{\cont}}
\DeclareMathOperator{\at}{at}
\newcommand{\PrLat}{\categ{Pr}^{\kern0.05em\operatorname{L},\at}}
\newcommand{\PrRat}{\categ{Pr}^{\kern0.05em\operatorname{R},\at}}
\newcommand{\PrLomega}{\categ{Pr}^{\kern0.05em\operatorname{L},\upomega}}
\newcommand{\An}{\categ{An}}
\renewcommand{\Sp}{\categ{Sp}}
\renewcommand{\Pr}{\categ{Pr}}
\DeclareMathOperator{\dual}{dual}
\DeclareMathOperator{\stable}{st}
\newcommand{\PrLst}{\PrL_{\stable}}
\newcommand{\Prdual}{\Pr^{\dual}}
\newcommand{\Catperf}{\Cat^{\perf}}
\newcommand{\CATinfty}{\categ{\textsc{Cat}}_{\infty}}
\newcommand{\Piinfty}{\Pi_{\infty}}
\DeclareMathOperator{\Cons}{Cons}
\newcommand{\ConsP}{\Cons_{P}}
\newcommand{\ConsQ}{\Cons_{Q}}
\newcommand{\THH}{\mathrm{THH}}
\newcommand{\upc}{\mathrm{c}}
\theoremstyle{definition}
\newtheorem{outline}[equation]{Outline}
\title{\Large Localizing invariants of constructible sheaves}
\author{\normalsize Qingyuan Bai \and \normalsize Peter J. Haine}
\date{\normalsize \today}
\begin{document}

\maketitle

%-------------------------------------------------------------------%
% Abstract                                                          %
%-------------------------------------------------------------------%

\begin{abstract} 
	Given an open-closed decomposition of the stratifying poset, we construct a new semi-orthogonal decomposition of the \category of constructible sheaves on a stratified space admitting an exit-path \category.
	From this we obtain a direct sum decomposition of the localizing invariants of the \category of constructible sheaves. 
	Since the $ * $-pullback to the open stratum in the usual (recollement) semi-orthogonal decomposition is not strongly left adjoint, this splitting does not follow from pure sheaf theory considerations.
	Instead, the splitting crucially relies on the exodromy equivalence: it implies that on the level of constructible sheaves, the $ \ast $-pullback to a closed stratum and the $ ! $-pushforward from an open stratum admit left adjoints.
	These new functors provide an additional semi-orthogonal decomposition (with the roles of open and closed reversed) in which the relevant functors are strongly left adjoint. 
\end{abstract}

\setcounter{tocdepth}{1}

\tableofcontents

%-------------------------------------------------------------------%
%-------------------------------------------------------------------%
%  Introduction                                                     %
%-------------------------------------------------------------------%
%-------------------------------------------------------------------%

\setcounter{section}{-1}

\section{Introduction}

The goal of this paper is to compute localizing invariants of the large \category of sheaves on a sufficiently nice stratified topological space constructible with respect to the stratification.
The key tool that allows us to perform this computation is the theory of exit-path \categories \cites[\HAapp{A}]{HA}[\href{https://arxiv.org/pdf/2108.01924\#section.3}{\S3}]{MR4683160}{arXiv:2401.12825}[\href{https://arxiv.org/pdf/2308.09550\#section.3}{\S3}]{MR4749419}.
Namely, if $ (X,P) $ is a stratified space, then under mild conditions, there exists \acategory $ \Piinfty(X,P) $ called the \textit{exit-path \category} of $ (X,P) $ so that for any compactly assembled presentable \category $ \Ecal $, there is a functorial equivalence
\begin{equation}\label{eq:exodromy_equivalence}
	\ConsP(X;\Ecal) \equivalent \Fun(\Piinfty(X,P),\Ecal)
\end{equation}
between $ \Ecal $-valued constructible sheaves on $ (X,P) $ and functors $ \Piinfty(X,P) \to \Ecal $. With the help of \eqref{eq:exodromy_equivalence}, our goal is transformed into understanding localizing invariants of functor \categories, which we study in purely categorical terms.

There is an abundant supply of stratified spaces $ (X,P) $ where \eqref{eq:exodromy_equivalence} applies (hence our main results apply as well). 
Examples include Whitney stratifications of manifolds, locally finite stratifications of a real analytic manifolds by subanalytic subsets \cites[\href{https://arxiv.org/pdf/2401.12825\#equation.5.3.9}{Theorem 5.3.9}]{arXiv:2401.12825}, stratifications of the $ \RR $-points of real varieties by Zariski subsets \cites[\href{https://arxiv.org/pdf/2401.12825\#equation.5.3.13}{Theorem 5.3.13}]{arXiv:2401.12825}, among many others.
The exit-path \category often turns out to be a $ 1 $-category, and there are many cases where it can be explicitly computed.
For example, Jansen computed the exit-path \category of reductive Borel--Serre compactifications \cite[\href{https://arxiv.org/pdf/2012.10777\#theorem.4.3}{Theorem 4.3}]{MR4651892}. 

Despite the concrete goal we have in mind, we choose to work with the very general notion of \textit{exodromic stratified \topoi} (introduced in \cite{arXiv:2401.12825}) throughout the paper. 
This abstraction not only cleans up several technicalities, but also allows for applications in other contexts. 
In one direction, there are many naturally occurring stratified topological stacks for which there exists an \textit{exodromy equivalence} of the form \eqref{eq:exodromy_equivalence}.
For example, the underlying topological stack of the Deligne--Mumford--Knudsen compactification \smash{$ \overline{\Mcal}_{g,n} $} with the natural stratification by the poset of stable genus $ g $ dual graphs with $ n $ marked points \cite[\href{https://arxiv.org/pdf/2308.09551\#theorem.6.6}{Corollary 6.6} \& \href{https://arxiv.org/pdf/2308.09551\#theorem.6.7}{Theorem 6.7}]{arXiv:2308.09551}, as well as Lurie and Tanaka's moduli stack of broken lines \cite[\href{https://arxiv.org/pdf/1805.09587\#theorem.4.4.1}{Theorem 4.4.1}]{arXiv:1805.09587}.  
On the other hand, there is a parallel theory of the exodromy equivalence for étale sheaves \cites{arXiv:1807.03281}, and computations of exit-path \categories in this context \cite{arXiv:2410.06280}. 
Our formal result can be applied to compute localizing invariants of \categories of constructible sheaves arising in both of these contexts.

Now onto our results.
We begin by stating our main result for localizing invariants of functor \categories $\Fun(\Ccal,\Ecal)$ where $\Ccal$ admits a functor into a poset $P$, then relate it back to the theory of constructible sheaves. This setting is an abstraction of the fact that the exit-path \category comes equipped with natural functor to the stratifying poset $ \Piinfty(X,P) \to P $ that should be thought of as recording which stratum a point of $ X $ lies in. 

Let $ \Tcal $ be a presentable stable \category and let $ L \colon \Catperf \to \Tcal $ be a finitary localizing invariant.
We write
\begin{equation*}
	\Lcont \colon \Prdual \to \Tcal
\end{equation*}
for Efimov's extension of $ L $ to a localizing invariant defined on the \category of dualizable presentable stable \categories and strongly left adjoint functors (see \cite{arXiv:2405.12169}). 
The main combinatorial result of this paper is the following:

\begin{theorem}[{(\Cref{cor:splitting_for_functors})}]\label{intro_thm:splitting_for_functors}
	Let $ s \colon \fromto{\Ccal}{P} $ be a functor from a small \category to a poset.
	For each $ p \in P $, write $ \Ccal_p \colonequals s^{-1}(p) $.
	Then for any dualizable presentable stable \category $ \Ecal $, the functors given by left Kan extension along the inclusions $ \Ccal_p \inclusion \Ccal $ induce a natural equivalence
	\begin{equation*}
		\Lcont(\Fun(\Ccal,\Ecal)) \equivalent \Directsum_{p \in P} \Lcont(\Fun(\Ccal_p,\Ecal)) \period
	\end{equation*}
\end{theorem}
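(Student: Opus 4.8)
\begin{proofoutline}
The plan is to iterate an open--closed decomposition of the stratifying poset, the crucial feature of each step being that — in contrast to general sheaf theory — every restriction and Kan-extension functor in sight is strongly left adjoint, so that Efimov's $\Lcont$ applies to the resulting Verdier sequences. So fix an upward-closed subset $U \subseteq P$ with downward-closed complement $Z \colonequals P \setminus U$, and set $\Ccal_U \colonequals s\inv(U)$ and $\Ccal_Z \colonequals s\inv(Z)$. Because $U$ is upward-closed, $\Ccal_U \subseteq \Ccal$ is a cosieve and $\Ccal_Z \subseteq \Ccal$ is a sieve. Writing $i_U \colon \Ccal_U \inclusion \Ccal$ and $i_Z \colon \Ccal_Z \inclusion \Ccal$, it follows that the left Kan extension $i_{U,!}$ is the ``extension by zero'' functor (valued in $0$ on $\Ccal_Z$) and the right Kan extension $i_{Z,*}$ is the ``extension by zero'' functor (valued in $0$ on $\Ccal_U$). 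In particular, for each $F \in \Fun(\Ccal,\Ecal)$ the counit of $i_{U,!} \dashv i_U^{*}$ and the unit of $i_Z^{*} \dashv i_{Z,*}$ fit into a functorial cofiber sequence $i_{U,!}\,i_U^{*}F \to F \to i_{Z,*}\,i_Z^{*}F$, as one sees by restricting separately to $\Ccal_U$ and $\Ccal_Z$. Thus $i_Z^{*}$ realizes $\Fun(\Ccal_Z,\Ecal)$ as the Verdier quotient of $\Fun(\Ccal,\Ecal)$ by the essential image of the fully faithful functor $i_{U,!}$, giving an exact sequence
\begin{equation*}
	\Fun(\Ccal_U,\Ecal) \xrightarrow{\ i_{U,!}\ } \Fun(\Ccal,\Ecal) \xrightarrow{\ i_Z^{*}\ } \Fun(\Ccal_Z,\Ecal) \period
\end{equation*}

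Each term here is dualizable, being a functor \category out of a small \category with values in the dualizable presentable stable \category $\Ecal$. I claim both functors are strongly left adjoint: the right adjoint of $i_{U,!}$ is the restriction $i_U^{*}$, computed pointwise and hence colimit-preserving; and the right adjoint of $i_Z^{*}$ is the extension-by-zero functor $i_{Z,*}$, which is again colimit-preserving. The second point is exactly what fails for sheaves on a general space — there the right adjoint $i^{!}$ of $i_{*}$ need not preserve colimits — and it is here that the exodromy equivalence does its work. We may therefore apply $\Lcont$ to get a fiber sequence, and this fiber sequence \emph{splits}: the left Kan extension $i_{Z,!}$ is a section of $i_Z^{*}$ (since $i_Z$ is fully faithful), and it is itself strongly left adjoint (its right adjoint is $i_Z^{*}$). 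This section $i_{Z,!}$ — the left adjoint of the $*$-pullback to the closed stratum, available only through exodromy — is precisely the ``extra'' structure referred to in the introduction. Hence left Kan extension along $i_U$ and $i_Z$ induces an equivalence $\Lcont(\Fun(\Ccal,\Ecal)) \equivalent \Lcont(\Fun(\Ccal_U,\Ecal)) \Directsum \Lcont(\Fun(\Ccal_Z,\Ecal))$, natural in $\Ecal$.

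When $P$ is finite one now iterates: choose a maximal $p_0 \in P$, so that $\{p_0\}$ is upward-closed with downward-closed complement, apply the step above, and induct on $|P|$; because left Kan extensions compose, the maps in the resulting splitting are indeed induced by left Kan extension along the inclusions $\Ccal_p \inclusion \Ccal$. For an infinite poset this iteration does not converge — $\Lcont$ does not commute with the relevant cofiltered limits, and in general $\Lcont(\Fun(\Ccal,\Ecal))$ is the \emph{direct} sum, not the product, of the $\Lcont(\Fun(\Ccal_p,\Ecal))$. I would instead single out the full subcategory $\Fun(\Ccal,\Ecal)^{\mathrm{fin}} \subseteq \Fun(\Ccal,\Ecal)$ of functors supported on finitely many strata, exhibit it as a filtered colimit \emph{in $\Prdual$} of the $\Fun(\Ccal_Q,\Ecal)$ over a suitable directed family of finite subposets $Q$ along strongly left adjoint (left-Kan-extension) transition functors, and deduce from continuity of $\Lcont$ and the finite case that $\Lcont(\Fun(\Ccal,\Ecal)^{\mathrm{fin}}) \equivalent \Directsum_{p \in P} \Lcont(\Fun(\Ccal_p,\Ecal))$. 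It then remains to show that the inclusion $\Fun(\Ccal,\Ecal)^{\mathrm{fin}} \inclusion \Fun(\Ccal,\Ecal)$ induces an equivalence on $\Lcont$, equivalently that $\Lcont$ of the Verdier quotient vanishes, which I expect to follow from an Eilenberg-swindle in the quotient \category. The open--closed step is formal once one knows (via exodromy) that $i_{Z,*}$ and $i_{Z,!}$ are strongly left adjoint, and the finite case is routine; the genuinely delicate part — and where I expect the real effort to lie — is this passage to arbitrary posets, namely arranging the directed family of finite subposets so that all transition functors land in $\Prdual$ and establishing the vanishing of $\Lcont$ on $\Fun(\Ccal,\Ecal)/\Fun(\Ccal,\Ecal)^{\mathrm{fin}}$.
\end{proofoutline}
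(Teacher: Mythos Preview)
Your finite case is essentially the paper's proof: both iterate an open--closed splitting, the only cosmetic difference being that the paper peels off the set of \emph{minimal} elements (a closed subset) and inducts on Krull dimension, while you peel off a single maximal element and induct on cardinality. The mechanism---the Verdier sequence with $i_{U,!}$ and $i_Z^{*}$ strongly left adjoint, split by the strongly left adjoint section $i_{Z,!}$---is exactly what the paper formalizes via its ``flipped recollement'' (\Cref{prop:flipping_recollements}) and \Cref{prop:spliting_recollements_with_extra_adjoints}.

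Your infinite case, however, rests on a misconception about colimits in $\Prdual$. You propose to realize the finitely-supported subcategory $\Fun(\Ccal,\Ecal)^{\mathrm{fin}}$ as $\colim_{Q} \Fun(\Ccal_Q,\Ecal)$ in $\Prdual$ and then argue separately, via an Eilenberg swindle, that the quotient $\Fun(\Ccal,\Ecal)/\Fun(\Ccal,\Ecal)^{\mathrm{fin}}$ has vanishing $\Lcont$. But colimits in $\Prdual$ (equivalently in $\PrL$, since the forgetful functor preserves colimits) are computed as \emph{limits} in $\CATinfty$ along the right adjoints---here, the restriction functors---so
\begin{equation*}
	\colim_{Q \in \Subfin(P)} \Fun(\Ccal_Q,\Ecal) \equivalent \lim_{Q \in \Subfin(P)^{\op}} \Fun(\Ccal_Q,\Ecal) \equivalent \Fun\Bigl(\colim_{Q} \Ccal_Q,\Ecal\Bigr) \equivalent \Fun(\Ccal,\Ecal) \period
\end{equation*}
Filtered colimits of fully faithful left adjoints in $\PrL$ are \emph{not} unions of subcategories; the simplest instance is that $\colim_n \Ecal^n$ along the ``pad by zero'' maps is $\Fun(\NN,\Ecal) = \prod_{\NN}\Ecal$, not the finitely-supported sequences. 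So the filtered colimit you write down is already all of $\Fun(\Ccal,\Ecal)$, the Eilenberg-swindle step is unnecessary, and there is nothing to arrange about the directed family (all finite subposets work, and every left Kan extension is automatically strongly left adjoint since its right adjoint is a restriction). This is precisely the paper's \Cref{cor:colimit_over_finite_subposets}: the ``genuinely delicate part'' you flag is in fact a one-line consequence of how colimits in $\PrL$ are computed, combined with the assumption that $L$ is finitary.
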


Applying \Cref{intro_thm:splitting_for_functors} when $ \Ccal $ is the exit-path \category of an exodromic stratified \topos, we deduce the following canonical direct sum decomposition for localizing invariants of the \category of constructible sheaves:

\begin{theorem}[{(\Cref{cor:splitting_for_exodromic_stratified_topoi})}]\label{thm:main_theorem_in_intro}
	Let $ (\Xcal,P) $ be an exodromic stratified \topos and $ \Ecal $ be a dualizable presentable stable \category. 
	There are natural equivalences
	\begin{equation*}
		\Lcont(\ConsP(\Xcal;\Ecal)) \equivalent \Directsum_{p \in P} \Lcont(\LC(\Xcal_p;\Ecal)) \equivalent \Directsum_{p \in P} \Lcont(\Fun(\Piinfty(\Xcal_p),\Ecal)) \period
	\end{equation*}
	Here, $ \Xcal_p $ denotes the $ p $-th stratum of $ (\Xcal,P) $, $ \LC(\Xcal_p;\Ecal) $ is the \category of locally constant $ \Ecal $-valued sheaves on $ \Xcal_p $, and $ \Piinfty(\Xcal_p) $ is the shape of the \topos $ \Xcal_p $.
\end{theorem}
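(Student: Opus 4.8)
The plan is to deduce \Cref{thm:main_theorem_in_intro} from the combinatorial \Cref{intro_thm:splitting_for_functors} by feeding in the exit-path \category. Write $ \Ccal = \Piinfty(\Xcal,P) $ for the exit-path \category of $ (\Xcal,P) $ and $ s \colon \Ccal \to P $ for its canonical functor to the stratifying poset. Since $ (\Xcal,P) $ is exodromic, $ \Ccal $ is small and the exodromy equivalence \eqref{eq:exodromy_equivalence} provides, naturally in $ \Ecal $, an equivalence $ \ConsP(\Xcal;\Ecal) \equivalent \Fun(\Ccal,\Ecal) $ for every compactly assembled presentable \category $ \Ecal $; in particular this applies to every dualizable presentable stable $ \Ecal $, in which case both sides lie in $ \Prdual $ and the equivalence is one of dualizable presentable stable \categories. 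Applying $ \Lcont $ and then \Cref{intro_thm:splitting_for_functors} to $ s $ yields $ \Lcont(\ConsP(\Xcal;\Ecal)) \equivalent \Lcont(\Fun(\Ccal,\Ecal)) \equivalent \Directsum_{p\in P}\Lcont(\Fun(s^{-1}(p),\Ecal)) $.

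It then remains to identify each fiber $ \Ccal_p = s^{-1}(p) $ and the corresponding functor \category. First I would invoke the structural fact — part of the exodromy formalism — that the fiber of the exit-path \category over $ p $ is naturally the shape $ \Piinfty(\Xcal_p) $ of the $ p $-th stratum, i.e. that over a single stratum the exit-path \category is just the fundamental $ \infty $-groupoid of that stratum. This gives $ \Fun(s^{-1}(p),\Ecal) \equivalent \Fun(\Piinfty(\Xcal_p),\Ecal) $ and hence the third expression in the statement. Second, each stratum $ \Xcal_p $, regarded as trivially stratified over the point, is again exodromic, so \eqref{eq:exodromy_equivalence} specializes to the (un)monodromy equivalence $ \LC(\Xcal_p;\Ecal) \equivalent \Fun(\Piinfty(\Xcal_p),\Ecal) $, natural in $ \Ecal $ and again an equivalence in $ \Prdual $; applying $ \Lcont $ produces the middle expression. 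Splicing the three chains of equivalences together finishes the proof.

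Conditional on \Cref{intro_thm:splitting_for_functors}, the work here is bookkeeping rather than conceptual, and that is where I expect the only friction. One must check that the exodromy equivalence and its restriction-to-strata avatars are equivalences in $ \Prdual $ — equivalently, that all the functors in sight are strongly left adjoint, so that $ \Lcont $ is even defined on them — that everything is natural in $ \Ecal $, and that under exodromy the left Kan extension functors $ \Fun(s^{-1}(p),\Ecal) \to \Fun(\Ccal,\Ecal) $ appearing in \Cref{intro_thm:splitting_for_functors} correspond to the expected sheaf-theoretic pushforwards from the strata, so that the resulting decomposition is canonical. The one genuinely external input is the identification of the fibers of $ s $ with the shapes of the strata; once that is in hand everything else is formal, and the substance of the result is entirely contained in \Cref{intro_thm:splitting_for_functors}, the combinatorial heart of the paper.
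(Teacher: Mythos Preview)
Your proposal is correct and follows essentially the same route as the paper: apply the exodromy equivalence to reduce to $\Fun(\Piinfty(\Xcal,P),\Ecal)$, invoke \Cref{intro_thm:splitting_for_functors} with $s\colon\Piinfty(\Xcal,P)\to P$, and identify each fiber with $\Piinfty(\Xcal_p)$ via the structural compatibility of exit-path \categories with strata. Your added step recovering the middle $\LC(\Xcal_p;\Ecal)$ term by applying exodromy to the trivially stratified stratum is correct and is left implicit in the paper's body (the corresponding \Cref{cor:splitting_for_exodromic_stratified_topoi} omits that middle term); the only thing to note is that your worries about the exodromy equivalence lying in $\Prdual$ are moot, since equivalences are automatically strongly left adjoint.
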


\noindent In other words, localizing invariants of the \category of constructible sheaves on a stratified \topos are canonically identified with the direct sum of localizing invariants of the \categories of locally constant sheaves on each stratum.

\begin{remark}[{(why exodromy is needed)}]
	One might be tempted to think that the decomposition in \Cref{thm:main_theorem_in_intro} follows immediately from the usual recollements in sheaf theory; however, this is not the case.
	Instead, we need extra functors that are provided by the exodromy equivalence.
	
	To explain this, let us consider the simplest case, where the stratifying poset is $ \{0 < 1\} $, $ \Xcal $ is the \topos of sheaves on a real analytic manifold $ X $, and the strata $ X_0 = Z $ and $ X_1 = U $ are subanalytic subspaces. 
	Write $ i \colon Z \inclusion X $ and $ j \colon U \inclusion X $ for the inclusions.
	Then there is a recollement on the level of sheaves
	\begin{equation*}
		\begin{tikzcd}[sep=6em]
			\Sh(Z;\Ecal) \arrow[r, "\ilowerstar"'{description, xshift=-0.8em}, shift right=2ex, hooked] & \Sh(X;\Ecal) \arrow[l, "\iupperstar" {description, xshift=0.8em}] \arrow[l, "\iuppershriek", shift right=-4ex] \arrow[r, "\jupperstar"{description, xshift=0.8em}] & \Sh(U;\Ecal) \period \arrow[l, shift left=2ex, hooked', "\jlowerstar"] \arrow[l, shift right=2ex, hooked', "\jlowershriek"'{ xshift=-0.8em}]
		\end{tikzcd}
	\end{equation*}	
	One might hope to say that this recollement restricts to the subcategory of constructible sheaves and the pullback functors $ (\iupperstar,\jupperstar) \colon \Cons_{\{0<1\}}(X;\Ecal) \to \LC(Z;\Ecal) \cross \LC(U;\Ecal) $ induce an equivalence on continuous localizing invariants.
	However, there are two problems with this:
	\begin{enumerate}
		\item Without regularity assumptions on the stratification (e.g., being Whitney), the functor $ \jlowerstar $ need not preserve constructibility with respect to the given stratifications. 
		For example, if $ X = \RR $ and $ U = (0,\infty) $ the $ * $-pushforward of a nonzero locally constant sheaf on $ U $ is never constructible.
		% (Let us highlight that it is natural to consider arbitrary subanalytic stratifications, for example, in the context of Stokes data \cites{arXiv:2401.12335}{arXiv:2504.05360}).

		\item Continuous localizing invariants are only functorial in \textit{strongly} left adjoint functors, i.e., left adjoints whose right adjoint preserves colimits.
		The functor $ \jupperstar $ is rarely strongly left adjoint.
	\end{enumerate}

	So we must argue differently.
	A special case of the functoriality of the exodromy equivalence implies that $ \iupperstar \colon \Cons_{\{0<1\}}(X;\Ecal) \to \LC(Z;\Ecal) $ admits an additional left adjoint \smash{$ \ilowersharp^{\upc} $}. 
	This is a new functor only defined at the level of constructible sheaves, and cannot be extracted from general sheaf theory considerations.
	Similarly, $ \jupperstar \colon \Cons_{\{0<1\}}(X;\Ecal) \to \LC(U;\Ecal) $ admits an additional right adjoint \smash{$ \jlowerstar^{\upc} $}.
	Moreover, the functors $ \iupperstar $ and $ \jupperstar $ do define the pullbacks in a recollement of $ \Cons_{\{0<1\}}(X;\Ecal) $ into $ \LC(Z;\Ecal) $ and $ \LC(U;\Ecal) $.
	However, $ \jupperstar $ is still not strongly continuous, so this is not enough to obtain the desired splitting.
	Together with this recollement, the fact that $ \iupperstar $ admits a left adjoint implies that $ \jlowershriek $ admits a left adjoint $ \jlowershriek^{\Lup} $ so that we have functors
	\begin{equation*}
		\begin{tikzcd}[sep=6em]
			\LC(Z;\Ecal) \arrow[r, "\ilowerstar"'{description, xshift=-0.8em}, shift right=2ex, hooked] \arrow[r, "\ilowersharp^{\upc}", shift left=2ex, hooked] & \Cons_{\{0<1\}}(X;\Ecal) \arrow[l, "\iupperstar" {description, xshift=0.8em}] \arrow[l, "\iuppershriek", shift right=-4ex] \arrow[r, "\jupperstar"{description, xshift=0.8em}] \arrow[r, "\jlowershriek^{\Lup}", shift right=-4ex] & \LC(U;\Ecal) \period \arrow[l, shift left=2ex, hooked', "\jlowerstar^{\upc}"] \arrow[l, shift right=2ex, hooked', "\jlowershriek"{description, xshift=-0.8em}]
		\end{tikzcd}
	\end{equation*}
	Moreover, the strongly continuous functors $ \jlowershriek^{\Lup} $ and $ \iupperstar $ define the pullbacks in a new recollement, where the roles of the open and closed are reversed.
	See \Cref{prop:flipping_recollements}.
	Using this new recollement, we are able to show that $ \ilowersharp^{\upc} $ and $ \jlowershriek $ induce the equivalence in \Cref{thm:main_theorem_in_intro}.
\end{remark}

We now explain one of the conceptual consequences of our results, namely, how to deduce the \textit{lattice conjecture} \cite[\href{https://arxiv.org/pdf/1211.7360\#prop.1.7}{Conjecture 1.7}]{MR3477639} for the \category of constructible sheaves on stratified topological spaces from known cases for \categories of local systems \cite[\href{https://arxiv.org/pdf/2102.01566\#prop.6.16}{Proposition 6.16}]{arxiv:2102.01566}.

\begin{corollary}[{(\Cref{rem:deducing_lattice_conjecture_from_strata})}]
	Let $\CC$ be the field of complex numbers and let $(X,P)$ be an exodromic stratified topological space with finite stratifying poset $P$. 
	Assume that for each $p\in P$, the \category $\LC (X_p;\Mod_\CC)^\upomega$ satisfies the lattice conjecture. 
	Then $\ConsP(X;\Mod_\CC)^\upomega$ also satisfies the lattice conjecture.
\end{corollary}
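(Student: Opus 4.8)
The plan is to peel off the strata one at a time using the recollements constructed in this paper, transporting the lattice conjecture along the resulting Verdier localization sequences; the induction is on the (finite) number of elements of $P$. Whatever its precise formulation, the lattice conjecture is a property of a small idempotent-complete stable \category that one expects to pass to the middle term of a localization sequence once it is known for the two outer terms, and verifying this passage is the main obstacle (discussed at the end). The base case is $P = \{p\}$ a point, where $\ConsP(X;\Mod_\CC) = \LC(X_p;\Mod_\CC)$, so $\ConsP(X;\Mod_\CC)^{\upomega}$ satisfies the lattice conjecture by hypothesis.

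For the inductive step, fix a minimal element $p_0 \in P$, so $\{p_0\}$ is downward closed, hence a closed subset of $P$, while $U \coloneqq P \setminus \{p_0\}$ is upward closed, hence open. Then $X_{p_0} \inclusion X$ is a closed inclusion of strata, $X_U \inclusion X$ is an open inclusion, and $(X_U,U)$ is again an exodromic stratified space whose strata are the $X_q$ for $q \in U$, i.e.\ a smaller instance of the situation at hand. The paper establishes a recollement of $\ConsP(X;\Mod_\CC)$ with closed piece $\LC(X_{p_0};\Mod_\CC)$, embedded via $\ilowerstar$, and open piece $\ConsU(X_U;\Mod_\CC)$, realized as the quotient via $\jupperstar$, all of whose functors preserve constructibility --- together with a second, flipped recollement (\Cref{prop:flipping_recollements}). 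As stressed in the introduction, this recollement is not formal: the $\ast$-pushforward $\jlowerstar$ does not preserve constructibility, so one genuinely needs the exodromy-provided functors ($\jlowerstar^{\upc}$, $\ilowersharp^{\upc}$, $\jlowershriek^{\Lup}$, and so on) to have it at all. Passing to compact objects, $\ilowerstar$ identifies $\LC(X_{p_0};\Mod_\CC)^{\upomega}$ with a thick subcategory of $\ConsP(X;\Mod_\CC)^{\upomega}$ whose Verdier quotient is $\ConsU(X_U;\Mod_\CC)^{\upomega}$, and since $\iupperstar$ and $\jupperstar$ are symmetric monoidal and the projection formulas hold, this subcategory is a tensor-ideal.

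By the inductive hypothesis $\ConsU(X_U;\Mod_\CC)^{\upomega}$ satisfies the lattice conjecture, and $\LC(X_{p_0};\Mod_\CC)^{\upomega}$ does by assumption, so it remains to transport the conjecture along the localization sequence $\LC(X_{p_0};\Mod_\CC)^{\upomega} \to \ConsP(X;\Mod_\CC)^{\upomega} \to \ConsU(X_U;\Mod_\CC)^{\upomega}$. In a tensor-triangular formulation, the monoidality of the recollement makes the Balmer spectrum of $\ConsP(X;\Mod_\CC)^{\upomega}$ the disjoint union of the Balmer spectra of the two pieces (the closed piece $\LC(X_{p_0};\Mod_\CC)^{\upomega}$ contributing a closed subset), so the lattice of localizing tensor-ideals of $\ConsP(X;\Mod_\CC)$ is the product of those of the two pieces and the comparison map for $\ConsP(X;\Mod_\CC)^{\upomega}$ factors through the comparison maps of the pieces. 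In a formulation via continuous localizing invariants, this is precisely where \Cref{thm:main_theorem_in_intro} enters: it identifies the continuous localizing invariant of $\ConsP(X;\Mod_\CC)$ with the direct sum over the strata of the invariants of the $\LC(X_q;\Mod_\CC)$, which is exactly what is needed to pin down the invariant side of the comparison.

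The genuine obstacle, as flagged above, is this last step: that the lattice conjecture is inherited by the middle term of a localization sequence from its two outer terms. If this stability is already recorded in the cited literature it can simply be quoted; otherwise one proves it directly, combining the gluing of Balmer spectra along a monoidal recollement, the fact that a localizing tensor-ideal of a glued category is determined by its trace on the closed piece and its image in the open piece, and --- in the localizing-invariant formulation --- the splitting of \Cref{thm:main_theorem_in_intro}. A secondary, more technical point the argument depends on is that the recollement of the large constructible-sheaf categories restricts to a genuine Verdier localization sequence on compact objects, i.e.\ that $\ilowerstar$, $\jupperstar$ and their adjoints preserve compactness; this follows from the exodromy description of these functors as (co)restriction along the map $\Piinfty(X_{p_0}) \inclusion \Piinfty(X,P)$ of exit-path \categories and its open analogue for $X_U$.
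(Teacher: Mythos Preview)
Your proposal contains a genuine misunderstanding of what Blanc's lattice conjecture is. The ``tensor-triangular formulation'' paragraph, with its discussion of Balmer spectra and ``the lattice of localizing tensor-ideals,'' is entirely off target: the conjecture has nothing to do with lattices of ideals or tensor-triangular geometry. The name refers to the expectation that topological $\Kup$-theory furnishes an integral \emph{lattice} inside periodic cyclic homology; concretely, for a $\CC$-linear $\Ccal \in \Catperf$ it asks whether the Chern character
\[
  \Kup^{\mathrm{top}}(\Ccal)\otimes\CC \longrightarrow \mathrm{HP}(\Ccal)
\]
is an equivalence. So the ``comparison map'' you need to analyze is this Chern character, and the relevant structure is that both $\Kup^{\mathrm{top}}(-)\otimes\CC$ and $\mathrm{HP}(-)$ are localizing invariants and the Chern character is a natural transformation between them. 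Your Balmer-spectrum argument does not touch this map at all.

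Once the conjecture is correctly identified, the paper's proof is much shorter than your inductive scheme. By the small-category splitting (\Cref{lem:splitting_for_small_categories}/\Cref{cor:splitting_constructible_small_category_finite_poset}), applying any localizing invariant $L$ to $\ConsP(X;\Mod_\CC)^\upomega$ yields $\bigoplus_{p\in P} L(\LC(X_p;\Mod_\CC)^\upomega)$. Apply this to both $\Kup^{\mathrm{top}}(-)\otimes\CC$ and $\mathrm{HP}(-)$; naturality of the Chern character then exhibits the map for $\ConsP(X;\Mod_\CC)^\upomega$ as the direct sum of the maps for the strata, each of which is an equivalence by hypothesis. No induction, no recollement bookkeeping, and no need to invoke \Cref{thm:main_theorem_in_intro} (which concerns large categories and continuous invariants, whereas here you are working with the small category $\ConsP(X;\Mod_\CC)^\upomega$). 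Your inductive route is salvageable---the split Verdier sequence you describe does give, after applying both invariants, a map of split fiber sequences to which two-out-of-three applies---but it is a longer path to the same destination and still requires you to drop the Balmer-spectrum detour entirely.
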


We also record a splitting result for localizing invariants of the \category $ \ConsP(\Xcal;\Ecal^\upomega) $ of constructible sheaves with compact stalks.

\begin{theorem}[{(\Cref{cor:splitting_constructible_small_category_finite_poset})}]\label{intro_thm:splitting_for_compact_stalks}
	Let $(\Xcal,P)$ be an exodromic stratified \topos where $ P $ is a finite poset, and let $\Ecal$ be a compactly generated stable \category with compact objects $\Ecal^\upomega $.
	There is a natural equivalence 
	\begin{equation*}
		L(\ConsP(\Xcal;\Ecal^\upomega)) \equivalent \Directsum_{p \in P} L(\Fun(\Piinfty(\Xcal_p),\Ecal^\upomega)) \period
	\end{equation*}
\end{theorem}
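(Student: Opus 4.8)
The plan is to deduce this from a purely combinatorial splitting for functor \categories valued in a small stable idempotent-complete \category, proved by induction on the number of elements of $P$, and then to feed in the exodromy equivalence. First I would set $\Dcal \coloneqq \Ecal^\upomega$, which is a small stable idempotent-complete \category, so that $\Fun(\Ccal,\Dcal)$ lies in $\Catperf$ for every small \category $\Ccal$ and $L$ is defined on all the \categories in sight. Via the exodromy equivalence, $\ConsP(\Xcal;\Ecal^\upomega)$ is identified with $\Fun(\Piinfty(\Xcal,P),\Dcal)$ — the functors landing in $\Dcal$ are exactly the constructible sheaves with compact stalks — and the fiber of the canonical functor $s \colon \Piinfty(\Xcal,P) \to P$ over $p$ is the shape $\Piinfty(\Xcal_p)$, just as in the proof of \Cref{cor:splitting_for_exodromic_stratified_topoi}. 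So it suffices to prove: for every functor $s \colon \Ccal \to P$ from a small \category to a finite poset, writing $\Ccal_p \coloneqq s^{-1}(p)$, there is a natural equivalence $L(\Fun(\Ccal,\Dcal)) \equivalent \Directsum_{p\in P}L(\Fun(\Ccal_p,\Dcal))$.

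For the inductive step I would choose a maximal element $q \in P$, so that $\{q\}$ is upward closed in $P$ and $Z \coloneqq P \setminus \{q\}$ is downward closed; correspondingly $\Ccal_q = s^{-1}(q) \inclusion \Ccal$ is a cosieve and $\Ccal_Z \coloneqq s^{-1}(Z) \inclusion \Ccal$ is a sieve. Because $q$ is maximal, left Kan extension along $\Ccal_q \inclusion \Ccal$ simply extends a functor by the zero object on $\Ccal_Z$, and right Kan extension along $\Ccal_Z \inclusion \Ccal$ simply extends by the zero object on $\Ccal_q$. The key observation — and this is what makes the small statement go through — is that these two Kan extensions \emph{preserve $\Dcal$-valued functors}, precisely because they form no nontrivial colimits or limits; by contrast, right Kan extension along the cosieve $\Ccal_q \inclusion \Ccal$ forms a nontrivial limit and in general fails to preserve compact stalks. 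Writing $j_! \colon \Fun(\Ccal_q,\Dcal) \inclusion \Fun(\Ccal,\Dcal)$ and $i_* \colon \Fun(\Ccal_Z,\Dcal) \inclusion \Fun(\Ccal,\Dcal)$ for the resulting fully faithful functors, one checks directly — just as for the usual recollement — that they exhibit a semiorthogonal decomposition of $\Fun(\Ccal,\Dcal)$ with factors $\Fun(\Ccal_Z,\Dcal)$ and $\Fun(\Ccal_q,\Dcal)$. Since $L$ carries the associated split Verdier sequence of \categories in $\Catperf$ to a split fiber sequence, $L(\Fun(\Ccal,\Dcal)) \equivalent L(\Fun(\Ccal_Z,\Dcal)) \oplus L(\Fun(\Ccal_q,\Dcal))$; applying the inductive hypothesis to $s|_{\Ccal_Z} \colon \Ccal_Z \to Z$ and iterating then yields the combinatorial statement, with the case $P = \emptyset$ as the base.

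The hard part is exactly the observation that both extension-by-zero functors preserve $\Dcal$-valuedness, so that the entire semiorthogonal decomposition already lives at the level of small stable \categories; this is where the finiteness of $P$ — which guarantees a maximal element, and hence the sieve/cosieve dichotomy — enters. It is also worth emphasizing what is \emph{not} needed: unlike the continuous case of \Cref{cor:splitting_for_exodromic_stratified_topoi}, we do not invoke the additional adjoints $\ilowersharp^{\upc}$ and $\jlowershriekL$ supplied by exodromy, nor the flipped recollement of \Cref{prop:flipping_recollements}. Because $L$ is functorial in \emph{all} exact functors between small stable \categories, the ordinary recollement of the functor \category already splits it; exodromy remains indispensable, but now only to identify $\ConsP(\Xcal;\Ecal^\upomega)$ with a functor \category and to describe the fibers of $\Piinfty(\Xcal,P) \to P$.
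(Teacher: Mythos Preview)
Your proof is correct and follows essentially the same route as the paper: reduce via exodromy to functor \categories, observe that the extension-by-zero formulas for $\ilowerstar$ and $\jlowersharp$ preserve $\Ecal^\upomega$-valued functors and yield a split Verdier sequence in $\Catperf$, and then induct over the poset. The only cosmetic difference is that the paper's induction (as in \Cref{lem:splitting_for_small_categories}, following \Cref{cor:splitting_for_functors}) peels off the set of minimal elements and proceeds by Krull dimension, whereas you remove a single maximal element at each step; your closing remark that the flipped recollement of \Cref{prop:flipping_recollements} is not needed here is also on point.
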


\begin{nul}
	Let us unpack what \Cref{thm:main_theorem_in_intro,intro_thm:splitting_for_compact_stalks} say in the most accessible case, where $ (X,P) $ is a real analytic manifold with a locally finite subanalytic stratification and $ \Xcal = \Sh(X) $.
	In this case, $ \Xcal_p $ is the \topos of sheaves on the $ p $-th stratum $ X_p $, and $ \Piinfty(\Xcal_p) $ is the underlying homotopy type $ \Piinfty(X_p) $ of the stratum $ X_p $.
	So the equivalences read as
	\begin{align*}
		\Lcont(\ConsP(X;\Ecal)) &\equivalent \Directsum_{p \in P} \Lcont(\Fun(\Piinfty(X_p),\Ecal)) \\ 
	\shortintertext{and}
		L(\ConsP(X;\Ecal^\upomega)) &\equivalent \Directsum_{p \in P} L(\Fun(\Piinfty(X_p),\Ecal^\upomega)) \period
	\end{align*}
\end{nul}

\begin{remark}
	When the coefficient \category $\Ecal$ is the \category of $ R $-modules for a ring $R$ and $ L $ is \Ktheory, \cref{thm:main_theorem_in_intro} is a formalization of the following useful mnemonic \cite{MO_384495}:
	\begin{quote}
		\textit{The \Ktheory of constructible sheaves gives constructible functions with values in the \Ktheory of the ring of coefficients.}
	\end{quote}
	Compare \cite[\href{https://arxiv.org/pdf/math/0610055\#page=19}{Lemma 3.3}]{MR2330165}.
\end{remark}

We further illustrate our main result with the following concrete example.

\begin{example}
	Consider the complex projective space $\PP^n$ equipped with the stratification over the poset $ [n] = \{0 < \cdots < n\}$ defined by the standard cell structure
	\begin{equation*}
		\emptyset\subset\PP^0\subset\PP^1\subset\cdots\subset\PP^n \period
	\end{equation*}
	This is a finite subanalytic stratification of a real analytic manifold.
	Since each stratum is contractible, we deduce that for any dualizable presentable stable \category $\Ecal$ we have a splitting
	\begin{equation*}
		\Lcont(\Cons_{[n]}(\PP^n;\Ecal)) 
		% \equivalent \Directsum_{p \in P} \Lcont(\Fun(\Piinfty(\PP^n_p),\Ecal)) 
		\equivalent \Directsum_{0\leq k \leq n} \Lcont(\Ecal)\period
	\end{equation*}
	The reader will immediately notice that there is an interesting and complicated space of exit-paths between the strata; it is a feature that, viewed from the eyes of localizing invariants, these strata seem completely detached from each other. 
\end{example}

\begin{remark}
	This paper is motivated by Beilinson's paper \cite{MR2330165}: we wanted to interpret the argument from \cite[\href{https://arxiv.org/pdf/math/0610055\#page=18}{Lemma 3.2}]{MR2330165} in the setting of large (dualizable) \categories. 
	See \cite{ZettoMSThesis} for a similar result and its application in geometric topology.
\end{remark}

\begin{remark}
	Let $ X $ be a real analytic manifold and $ \Lambda \subset \Tup^{\ast} X $ be a closed, conic, subanalytic Lagrangian. 
	Consider the \category $ \Sh_{\Lambda}(X;\Sp) $ of sheaves of spectra on $ X $ with microsupport contained in $ \Lambda $.
	If $ \Lambda $ is the union of conormals to a $ \upmu $-stratification of $ X $, then $ \Sh_{\Lambda}(X;\Sp) $ coincides with the full subcategory spanned by the constructible sheaves \cite[Proposition 8.4.1]{MR1299726}.
	So in this case, \Cref{thm:main_theorem_in_intro,intro_thm:splitting_for_compact_stalks} provide a formula for localizing invariants of $ \Sh_{\Lambda}(X;\Sp) $ and its variant with compact stalks; there is an especially nice formula for topological Hochschild homology (see \Cref{cor:K-theory_and_THH}).
	It would be very interesting to give a formula for localizing invariants of $ \Sh_{\Lambda}(X;\Sp) $ in general.
\end{remark}

\begin{outline}
	In \Cref{sec:background}, we recall some background material. 
	The aim is to fix our notations for dualizable \categories and their localizing invariants, as well as the exodromy equivalence.
	We also recall some useful properties of recollements from \cite[\HAapp{A}]{HA}.
	The familiar reader can safely skip this section. 
	Our work begins in \Cref{sec:semi-orthogonal_decompositions}, where we explain how to obtain semi-orthogonal decompositions of functor \categories. 
	Once we have such a general decomposition result, in \Cref{sec:splitting_results}, we combine these semi-orthogonal decompositions with the exodromy equivalence to prove splitting results for \categories of constructible sheaves.
\end{outline}

%-------------------------------------------------------------------%
%  Acknowledgments                                                  %
%-------------------------------------------------------------------%

\begin{acknowledgments}
	We thank Yuxuan Hu and Marco Volpe for enlightening discussions around the contents of this paper.
	We thank Li He for correcting a bad typo in an earlier formulation of \Cref{prop:flipping_recollements}.

	QB was supported by the Danish National Research Foundation through the Copenhagen Centre for Geometry and Topology (DNRF151) while most of this research was conducted, and he is also grateful to Max Planck Institute for Mathematics in Bonn for its hospitality and financial support. 
	PH gratefully acknowledges support from the NSF Mathematical Sciences Postdoctoral Research Fellowship under Grant \#DMS-2102957. 
\end{acknowledgments}

%-------------------------------------------------------------------%
%-------------------------------------------------------------------%
%  Background                                                       %
%-------------------------------------------------------------------%
%-------------------------------------------------------------------%

\section{Background}\label{sec:background}

For the convenience of the reader, we briefly recall the basics of dualizable \categories and localizing invariants (\cref{subsec:dualizable_categories_and_localizing_invariants}) as well as exit-path \categories (\cref{subsec:exodromy}).
In \cref{subsec:recollements}, we also recall a bit about recollements and prove a few technical results that we need later on.

%-------------------------------------------------------------------%
%  Dualizable ∞-categories and localizing invariants                %
%-------------------------------------------------------------------%

\subsection{Dualizable \texorpdfstring{$\infty$}{∞}-categories and localizing invariants}\label{subsec:dualizable_categories_and_localizing_invariants}

Our conventions for dualizable \categories and localizing invariants mostly follow \cite{arXiv:2405.12169}. 
We also recommend \cite{Hoyois_dualizable} for a concise presentation.

\begin{recollection}
	We write \smash{$\Catperf$} for the \category of small idempotent complete stable \categories and exact functors between them. 	
	This is a pointed \category (the category with one object and only an identity map is the intial and terminal object).
	Thus it makes sense to talk about cofiber and fiber sequences in \smash{$\Catperf$}. 
\end{recollection}

\begin{recollection}
	Let $\Tcal$ be a cocomplete stable \category. 
	A \defn{localizing invariant} (valued in $\Tcal$) is a pointed functor \smash{$ L\colon \Catperf \to \Tcal $}
	which takes cofiber-fiber sequences to fiber sequences. 	
	A localizing invariant $L$ is called \defn{finitary} if $ L $ commutes with filtered colimit in \smash{$\Catperf$} (note that the forgetful functor \smash{$\Catperf \to \Catinfty $} preserves filtered colimits).
\end{recollection}

\begin{recollection}
	We say that a functor $ \fupperstar \colon \fromto{\Ccal}{\Dcal} $ is a \defn{strongly left adjoint} if $ \fupperstar $ admits a right adjoint $ \flowerstar $ and $ \flowerstar $ is also a left adjoint.
\end{recollection}

\begin{recollection}
	We write \smash{$ \PrLst $} for the \category of presentable stable \categories and left adjoint functors.
	The \category \smash{$ \PrLst $} admits a symmetric monoidal structure given by the Lurie tensor product. 
	We use the term \defn{dualizable \category} to refer to a dualizable object of \smash{$ \PrLst $}.
	There are many equivalent characterizations of dualizable \categories; for example, a presentable stable \category $ \Ecal $ is a dualizable \category if and only if $ \Ccal $ is a retract in \smash{$ \PrLst $} of a compactly generated stable \category.

	We write \smash{$ \Prdual \subset \PrLst $} for the non-full subcategory with objects dualizable \categories and morphisms \textit{strongly} left adjoint functors.
	Let $\Tcal$ be a cocomplete stable \category. 
	Generalizing the definition for \smash{$ \Catperf $} verbatim, a functor \smash{$ L\colon \Prdual \to \Tcal $} is called a \defn{localizing invariant} if $ L $ is pointed and takes cofiber-fiber sequences to fiber sequences. 	
	Similarly, $ L $ is \defn{finitary} if $ L $ preserves filtered colimits.
\end{recollection}

\begin{recollection}
	Forming $\Ind$-completion defines a fully faithful functor 
	\begin{equation*}
		\Ind \colon \Catperf \inclusion \Prdual
	\end{equation*}
	whose image is the full subcategory spanned by the compactly generated dualizable \categories. 
	In \cite[\href{https://arxiv.org/pdf/2405.12169\#subsection.4.2}{\S4.2}]{arXiv:2405.12169}, Efimov showed a localizing invariant \smash{$ L \colon \Catperf \to \Tcal $} extends uniquely to a localizing invariant 
	\begin{equation*}
		\Lcont \colon \Prdual \to \Tcal\period
	\end{equation*} 
	We call $\Lcont$ the \defn{continuous extension} of $L$. 
	If $L$ is finitary then so is $ \Lcont $.
\end{recollection}

\begin{example}
	The functor of taking nonconnective \Ktheory $ \Kup \colon \Catperf \to \Sp $ is a finitary localizing invariant.
	Its continuous extension \smash{$\Kcont \colon \Prdual \to \Sp $} is often referred to as \textit{continuous \Ktheory}.
\end{example}

%-------------------------------------------------------------------%
%  Exodromy                                                         %
%-------------------------------------------------------------------%

\subsection{Exodromy}\label{subsec:exodromy}

We now briefly review the theory of \textit{exodromic stratified \topoi} introduced in \cite{arXiv:2401.12825}.
The key point is that in this setting, the theory of constructible sheaves valued in a dualizable \category with $ * $-pullback functoriality reduces to the theory of copresheaves on \categories with a conservative functor to a poset with functoriality given by precomposition. 

\begin{recollection}
	Let $ P $ be a poset.
	A full subposet $ U \subset P $ is called \textit{open} if $ U $ is upwards-closed, i.e., $ p \in U $ and $ q > p $ implies that $ q \in U $.
	Dually, $ Z \subset P $ is \textit{closed} if $ Z $ is downward-closed.
\end{recollection}

\begin{recollection}[{\cite[\href{https://arxiv.org/pdf/2401.12825\#subsection.2.1}{\S2.1}]{arXiv:2401.12825}}]
	Let $ P $ be a poset.
	A \defn{$ P $-stratified \topos} is a geometric morphism $ \slowerstar \colon \Xcal\to\Fun(P,\An) $ where $\Xcal$ is an \topos.
	A \defn{stratified \topos} is an \topos stratified over some poset. 
	We often write a stratified \topos as a pair $(\Xcal,P)$, leaving the geometric morphism $ \slowerstar $ implicit.
	Given a full subposet $ S \subset P $, we write
	\begin{equation*}
		\Xcal_S \colonequals \Xcal \crosslimits_{\Fun(P,\An)} \Fun(S,\An) \comma
	\end{equation*}
	where the functor $ \Fun(S,\An) \to \Fun(P,\An) $ is given by right Kan extension along the inclusion.
	When $ S = \{p\} $ consists of a single element, we write $ \Xcal_p \colonequals \Xcal_{\{p\}} $ and refer to $ \Xcal_p $ as the \defn{$ p $-th stratum} of $ \Xcal $.

	Given a presentable \category $ \Ecal $, the \category of \defn{$ \Ecal $-valued sheaves} is the Lurie tensor product
	\begin{equation*}
		\Sh(\Xcal;\Ecal) \colonequals \Xcal \tensor \Ecal \period
	\end{equation*}
	Using the stratification $ \slowerstar \colon \Xcal \to \Fun(P,\An) $, one can define the full subcategory
	\begin{equation*}
		\ConsP(\Xcal;\Ecal) \subset \Sh(\Xcal;\Ecal)
	\end{equation*}
	of \defn{$ P $-constructible sheaves} as sheaves whose restrictions to each stratum are locally constant.
	When applied to the \topos of (hyper)sheaves on a stratified space, these definitions recover the usual ones from topology.
\end{recollection}

\begin{recollection}
	In \cite[\href{https://arxiv.org/pdf/2401.12825\#equation.2.2.10}{Definition 2.2.10}]{arXiv:2401.12825}, the authors introduce a property of a stratified \topos $ (\Xcal,P) $ called being \defn{exodromic}.
	This guarantees that there exists a small \category $ \Piinfty(\Xcal,P) $ called the exit-path \category of $ (\Xcal,P) $ together with an \defn{exodromy equivalence}
	\begin{equation*}
		\ConsP(\Xcal;\Ecal)\equivalent \Fun(\Piinfty(\Xcal,P),\Ecal)
	\end{equation*}
	for every compactly assembled presentable \category $ \Ecal $ (in particular, when $ \Ecal $ is dualizable).
	See \cite[\href{https://arxiv.org/pdf/2401.12825\#equation.2.2.10}{Definition 2.2.10} \& \href{https://arxiv.org/pdf/2401.12825\#subsection.4.1}{\S4.1}]{arXiv:2401.12825}.
	Let us enumerate the formal properties of exit-path \categories that we need in this paper:
	\begin{enumerate}
		\item The exit-path \category comes equipped with a conservative functor $ \Piinfty(\Xcal,P) \to P $.
		In particular, the fibers of this functor are anima.

		\item For each locally closed subposet $ S \subset P $, the stratified \topos $ (\Xcal_S,S) $ is exodromic and there is a natural equivalence
		\begin{equation*}
			\Piinfty(\Xcal_S,S) \equivalence \Piinfty(\Xcal,P) \cross_P S \period
		\end{equation*}
		When $ S = \{p\} $ consists of a single element, this implies that the fiber $ \Piinfty(X,P) \cross_P \{p\} $ coincides with the shape $ \Piinfty(\Xcal_p) $ of the $ p $-th statum.

		\item \textit{Functoriality:} The assignment $ (\Xcal,P) \mapsto \Piinfty(\Xcal,P) $ is functorial in all stratified morphisms between exodromic stratified \topoi and is compatible with the exodromy equivalence.
		In particular, if $ (\flowerstar,\phi) \colon (\Xcal,P) \to (\Ycal,Q) $ is a morphism of stratified \topoi and both $ (\Xcal,P) $ and $ (\Ycal,Q) $ are exodromic, then there is natural induced functor
		\begin{equation*}
			 \Piinfty(\flowerstar,\phi) \colon \Piinfty(\Xcal,P) \to \Piinfty(Y,Q) \period
		\end{equation*}
		Moreover, the exodromy equivalence fits into a commutative square
		\begin{equation*}
			\begin{tikzcd}[column sep=8em]
				\ConsQ(\Ycal;\Ecal) \arrow[r, "\fupperstar"] \arrow[d, "\wr"'{xshift=0.25ex}] & \ConsP(\Xcal;\Ecal) \arrow[d, "\wr"{xshift=-0.25ex}] \\
				\Fun(\Piinfty(\Ycal,Q),\Ecal) \arrow[r, "{-\of \Piinfty(\flowerstar,\phi)}"'] & \Fun(\Piinfty(\Xcal,P),\Ecal) \period
			\end{tikzcd}
		\end{equation*}
		As a consequence, the functor $ \fupperstar \colon \ConsQ(\Ycal;\Ecal) \to \ConsP(\Xcal;\Ecal) $ admits both a left adjoint \smash{$ \flowersharp^{\upc} $} and a right adjoint \smash{$ \flowerstar^{\upc} $}, corresponding to left and right Kan extension along $ \Piinfty(\flowerstar,\phi) $.
	\end{enumerate}
\end{recollection}

%-------------------------------------------------------------------%
%  Recollements                                                     %
%-------------------------------------------------------------------%

\subsection{Recollements}\label{subsec:recollements}

We now recall some background about recollements of \categories.

\begin{recollection}[\HAa{Definition}{A.8.1}]\label{rec:recollement}
	Let $ \Xcal $ be \acategory with finite limits.
	We say that functors
	\begin{equation*}
		\iupperstar \colon \fromto{\Xcal}{\Zcal} \andeq \jupperstar \colon \fromto{\Xcal}{\Ucal}
	\end{equation*}
	exhibit $ \Xcal $ as the \defn{recollement} of $ \Zcal $ and $ \Ucal $ if the following conditions hold:
	\begin{enumerate}
		\item The functors $ \iupperstar $ and $ \jupperstar $ admit fully faithful right adjoints $ \ilowerstar $ and $ \jlowerstar $, respectively.

		\item The functors $ \iupperstar \colon \fromto{\Xcal}{\Zcal} $ and $ \jupperstar \colon \fromto{\Xcal}{\Ucal} $ are left exact and jointly conservative.

		\item The functor $ \jupperstar \ilowerstar \colon \fromto{\Zcal}{\Ucal} $ is constant with value the terminal object of $ \Ucal $.
	\end{enumerate}

	We also simply say that $ (\iupperstar \colon \fromto{\Xcal}{\Zcal}, \jupperstar \colon \fromto{\Xcal}{\Ucal}) $ is a recollement to mean that $ \Xcal $ admits finite limits and $ \iupperstar $ and $ \jupperstar $ exhibit $ \Xcal $ and the recollement of $ \Zcal $ and $ \Ucal $. Be careful that this definition is not symmetric in $ \Zcal $ and $ \Ucal $.
\end{recollection}

\begin{notation}\label{ntn:extra_adjoints_in_recollements}
	Let $ (\iupperstar \colon \fromto{\Xcal}{\Zcal}, \jupperstar \colon \fromto{\Xcal}{\Ucal}) $ be a recollement.
	If $ \jupperstar $ or $ \iupperstar $ admits a left adjoint, we denote its (necessarily fully faithful) left adjoint by $ \jlowersharp \colon \incto{\Ucal}{\Xcal} $ or $ \ilowersharp \colon \incto{\Zcal}{\Xcal} $, respectively.
	If $ \ilowerstar $ admits a right adjoint, we denote its right adjoint by $ \iuppershriek \colon \fromto{\Xcal}{\Zcal} $. 
	Recall from \HAa{Remark}{A.8.5} and \HAa{Corollary}{A.8.13} that:
	\begin{enumerate}
		\item If $\Xcal$ is pointed, then $\iuppershriek$ exists.

		\item If $\Zcal$ has an initial object, then $\jlowersharp$ exists.
	\end{enumerate}
	In particular, if $ \Xcal $ is stable, both $ \Zcal $ and $ \Ucal $ are stable.
\end{notation}

\begin{nul}
	Let us also note that if $ \Xcal $ is stable, then saying that $ (\iupperstar \colon \fromto{\Xcal}{\Zcal}, \jupperstar \colon \fromto{\Xcal}{\Ucal}) $ is a recollement is equivalent to saying that $ (\image(\ilowerstar),\image(\jlowerstar)) $ form a \textit{semi-orthogonal decomposition} of $ \Xcal $.
	See \cite[\SAGsec{7.2}]{SAG}.
\end{nul}

\begin{remark}
	\Cref{ntn:extra_adjoints_in_recollements} follows the now-standard convention in six-functor formalisms of denoting the left adjoint to $ \fupperstar $ by $ \flowersharp $, if it exists.
	When a map $ f $ is étale, one usually writes $ \flowershriek $ for $ \flowersharp $; so in the setting of recollements, $ \jlowersharp $ would typically be denoted by $ \jlowershriek $.
	In this paper, we'll depart from that convention.
	The reason is that in the geometric situation we're interested in, $ \iupperstar $ and $ \jupperstar $ always admit left adjoints, and under exodromy these correspond to functors given by left Kan extension along functors at the level of exit-path \categories.
	So it is also desirable to have a uniform notation for these functors (see \Cref{ntn:left_right_Kan_extension}).
\end{remark}

\begin{recollection}\label{rec:closed_part_of_recollement_is_kernel_of_jupperstar}
	As explained in \HAa{Remark}{A.8.5}, if $ (\iupperstar \colon \fromto{\Xcal}{\Zcal}, \jupperstar \colon \fromto{\Xcal}{\Ucal}) $ is a recollement, then the functor
	\begin{equation*}
		\ilowerstar \colon \Zcal \to \ker(\jupperstar) \colonequals \setbar{X \in \Xcal}{\jupperstar(X) \equivalent \pt}
	\end{equation*}
	is an equivalence.
	Similarly, if $ \Zcal $ has an initial object $ \emptyset$, then the functor 
	\begin{equation*}
		\jlowersharp \colon \Ucal \to \ker(\iupperstar) \colonequals \setbar{X \in \Xcal}{\iupperstar(X) \equivalent \emptyset}
	\end{equation*}
	is an equivalence.
\end{recollection}

We now address the interaction between dualizability and recollements.
To do so, we need the following technical lemma.

\begin{lemma}\label{lem:i_lowerstar_preserves_weakly_contractible_colimits}
	Let $ (\iupperstar \colon \fromto{\Xcal}{\Zcal}, \jupperstar \colon \fromto{\Xcal}{\Ucal}) $ be a recollement.
	Let $ \Acal $ be a weakly contractible \category, and assume that $ \Xcal $ admits $ \Acal $-shaped colimits.
	Then $ \ilowerstar \colon \incto{\Zcal}{\Xcal} $ preserves $ \Acal $-shaped colimits.
\end{lemma}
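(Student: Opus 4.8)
The plan is to show that, for any diagram $F \colon \Acal \to \Zcal$, the colimit of $\ilowerstar \circ F$ — which exists in $\Xcal$ by hypothesis — automatically lies in the essential image of $\ilowerstar$, and hence computes a colimit of $F$ in $\Zcal$ that $\ilowerstar$ preserves. By \Cref{rec:closed_part_of_recollement_is_kernel_of_jupperstar}, that essential image is precisely the full subcategory $\ker(\jupperstar) \subseteq \Xcal$, so concretely it suffices to check that $\ker(\jupperstar)$ is closed under the colimit $X \colonequals \colim_{\Acal}(\ilowerstar \circ F)$.

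The crucial observation is that, although in a recollement $\jupperstar$ is required only to be left exact, it is also a left adjoint — its right adjoint being $\jlowerstar$ — and therefore preserves all colimits that exist in $\Xcal$. Consequently
\begin{equation*}
	\jupperstar(X) \equivalent \colim_{\Acal}\bigl(\jupperstar \ilowerstar F\bigr) \period
\end{equation*}
By axiom~(3) of \Cref{rec:recollement}, the composite $\jupperstar \ilowerstar$ is constant at the terminal object of $\Ucal$, so $\jupperstar \ilowerstar F$ is the constant $\Acal$-indexed diagram at that terminal object. Here is where weak contractibility of $\Acal$ enters: the projection $\Acal \to \pt$ is cofinal, so the colimit of a constant $\Acal$-diagram coincides with the value of the diagram. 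Thus $\jupperstar(X)$ is the terminal object of $\Ucal$; that is, $X \in \ker(\jupperstar)$.

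To conclude, $\ker(\jupperstar) \subseteq \Xcal$ is a full subcategory containing both the diagram $\ilowerstar \circ F$ and its colimit $X$, and a colimiting cocone lying entirely in a full subcategory is colimiting there as well. Transporting back along the equivalence $\ilowerstar$ between $\Zcal$ and $\ker(\jupperstar)$ of \Cref{rec:closed_part_of_recollement_is_kernel_of_jupperstar}, we obtain that $\colim_{\Acal} F$ exists in $\Zcal$ with $\ilowerstar(\colim_{\Acal} F) \equivalent X \equivalent \colim_{\Acal}(\ilowerstar \circ F)$; one may also identify $\colim_{\Acal} F$ with $\iupperstar(X)$, using that the left adjoint $\iupperstar$ preserves colimits and $\iupperstar \ilowerstar \equivalent \id_{\Zcal}$. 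This is exactly the assertion that $\ilowerstar$ preserves $\Acal$-shaped colimits.

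I do not expect a genuine obstacle here. The single point requiring care is to use $\jupperstar$ through its \emph{right} adjoint $\jlowerstar$ — so that it preserves colimits — rather than through its left exactness, together with the observation that weak contractibility of $\Acal$ is precisely what makes the colimit of the constant terminal-valued diagram collapse to its value. That this hypothesis cannot be dropped is visible already from the cases $\Acal = \emptyset$ or $\Acal$ a two-element discrete category, where in any nontrivial recollement $\ilowerstar$ preserves neither the initial object nor binary coproducts.
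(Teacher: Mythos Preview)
Your proof is correct and essentially the same as the paper's: both use that $\jupperstar$ preserves colimits and that $\jupperstar\ilowerstar$ is constant at the terminal object, together with weak contractibility of $\Acal$ to collapse the resulting constant colimit. The only difference is packaging: the paper checks the comparison map $\colim_{\Acal}\ilowerstar f \to \ilowerstar\colim_{\Acal} f$ directly via joint conservativity of $(\iupperstar,\jupperstar)$, whereas you show the colimit already lands in $\ker(\jupperstar)$ and transport along the equivalence $\ilowerstar \colon \Zcal \equivalence \ker(\jupperstar)$.
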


\begin{proof}
	Let $f \colon \Acal \to \Zcal$ be a diagram in $\Zcal$ indexed by $\Acal$.
	We need to show that the natural map $\colim_{\Acal} \ilowerstar \of f \to 
	\ilowerstar (\colim_{\Acal} f)$ is an equivalence in $\Xcal$. 
	It suffices to check this map becomes an equivalence after applying $\iupperstar$ and $\jupperstar$ separately. 
	Note that $\iupperstar$ preserves colimits and $\iupperstar \ilowerstar \equivalent \id{\Zcal}$, so the first case is clear. 
	For the second case, note that $\jupperstar$ preserves colimits and $\jupperstar \ilowerstar f\equivalent \ast $ is the constant functor with value the terminal object of $\Ucal$. 
	So we need to show
	\begin{equation*}
		\textstyle \colim_{\Acal} \ast \to \ast
	\end{equation*}
	is an equivalence in $\Ucal$. 
	This follows from the assumption that $\Acal$ is weakly contractible.
\end{proof}

\begin{corollary}\label{cor:recollements_where_X_is_presentable_etc}
	Let $ (\iupperstar \colon \fromto{\Xcal}{\Zcal}, \jupperstar \colon \fromto{\Xcal}{\Ucal}) $ be a recollement.
	Then:
	\begin{enumerate}
		\item\label{cor:recollements_where_X_is_presentable_etc.1} If $ \Xcal $ is presentable, then $ \ilowerstar $ preserves weakly contractible colimits and both $ \Zcal $ and $ \Ucal $ are presentable.

		\item\label{cor:recollements_where_X_is_presentable_etc.2} If $ \Xcal $ is a dualizable \category, then $ \Zcal $ and $ \Ucal $ are both dualizable \categories.
	\end{enumerate}
\end{corollary}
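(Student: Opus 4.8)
The plan is to obtain part (1) cheaply from \Cref{lem:i_lowerstar_preserves_weakly_contractible_colimits} together with the theory of accessible localizations, and to reduce part (2) --- which carries the real content --- to the retract characterization of dualizable \categories recalled above, by exhibiting $ \Zcal $ and $ \Ucal $ as \emph{retracts of $ \Xcal $ in $ \PrLst $} (a retract in $ \PrLst $ of a retract in $ \PrLst $ of a compactly generated stable \category being again one of that form).

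For part (1): a presentable \category is cocomplete, so \Cref{lem:i_lowerstar_preserves_weakly_contractible_colimits} directly yields that $ \ilowerstar $ preserves weakly contractible colimits, and in particular filtered colimits. Consequently $ \Zcal \equivalent \image(\ilowerstar) $ is a reflective subcategory of $ \Xcal $ whose inclusion preserves filtered colimits; since $ \iupperstar $ preserves all colimits, the localization endofunctor $ \ilowerstar\iupperstar $ is then accessible, so $ \Zcal $ is presentable. For $ \Ucal $ I would argue symmetrically, using the identification $ \Ucal \equivalent \ker(\iupperstar) $ from \Cref{rec:closed_part_of_recollement_is_kernel_of_jupperstar} as an accessible, colimit-closed subcategory of $ \Xcal $ (or simply invoke the standard fact that the two constituents of a recollement of a presentable \category are presentable).

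For part (2): a dualizable \category is in particular presentable and stable, so by part (1) and \Cref{ntn:extra_adjoints_in_recollements} both $ \Zcal $ and $ \Ucal $ are presentable and stable. The key point is that stability upgrades the right adjoint $ \ilowerstar $ to a morphism of $ \PrLst $: being a right adjoint it preserves the zero object, by part (1) it preserves pushouts and filtered colimits, and in a cocomplete stable \category these together generate all colimits --- so $ \ilowerstar $ preserves all colimits. As $ \iupperstar $ is also a morphism of $ \PrLst $ and $ \iupperstar\ilowerstar \equivalent \id{\Zcal} $ by full faithfulness of $ \ilowerstar $, the pair $ (\ilowerstar, \iupperstar) $ exhibits $ \Zcal $ as a retract of $ \Xcal $ in $ \PrLst $, so $ \Zcal $ is dualizable. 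For $ \Ucal $ one must use not the right adjoint $ \jlowerstar $ --- which need not preserve colimits, reflecting the non-symmetry of \Cref{rec:recollement} --- but the extra \emph{left} adjoint: since $ \Xcal $ is pointed and $ \Zcal $, being stable, has an initial object, \Cref{ntn:extra_adjoints_in_recollements} supplies a fully faithful left adjoint $ \jlowersharp \colon \incto{\Ucal}{\Xcal} $ to $ \jupperstar $, which therefore preserves all colimits, lies in $ \PrLst $, and satisfies $ \jupperstar\jlowersharp \equivalent \id{\Ucal} $. Thus $ (\jlowersharp, \jupperstar) $ exhibits $ \Ucal $ as a retract of $ \Xcal $ in $ \PrLst $, and $ \Ucal $ is dualizable.

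The step I expect to be the main obstacle is exactly this promotion in part (2): the recollement axioms present $ \ilowerstar $ and $ \jlowerstar $ only as right adjoints, and a priori neither is a morphism of $ \PrLst $; it is stability --- via preservation of the zero object, combined with \Cref{lem:i_lowerstar_preserves_weakly_contractible_colimits} --- that makes $ \ilowerstar $ colimit-preserving, and on the open stratum one has to trade $ \jlowerstar $ for the additional left adjoint $ \jlowersharp $. By comparison, part (1) is routine once \Cref{lem:i_lowerstar_preserves_weakly_contractible_colimits} is in hand.
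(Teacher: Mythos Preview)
Your proposal is correct and follows essentially the same route as the paper: both arguments deduce presentability of $\Zcal$ from \Cref{lem:i_lowerstar_preserves_weakly_contractible_colimits} via accessible localization, and both prove dualizability by exhibiting $\Zcal$ and $\Ucal$ as retracts of $\Xcal$ in $\PrLst$ using the pairs $(\ilowerstar,\iupperstar)$ and $(\jlowersharp,\jupperstar)$. The only cosmetic differences are that the paper handles presentability of $\Ucal$ via an explicit pullback square in $\PrL$ rather than via $\ker(\iupperstar)$, and that the paper shows $\ilowerstar$ is a left adjoint by invoking the right adjoint $\iuppershriek$ supplied by \Cref{ntn:extra_adjoints_in_recollements} in the pointed case, rather than by your direct argument that $\ilowerstar$ preserves all colimits.
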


\begin{proof}
	For \eqref{cor:recollements_where_X_is_presentable_etc.1}, first note that \Cref{lem:i_lowerstar_preserves_weakly_contractible_colimits} shows that $ \ilowerstar $ preserves weakly contractible colimits. 
	In particular, $ \ilowerstar $ preserves filtered colimits.
	Thus $ \Zcal $ is an $ \upomega $-accessible localization of the presentable \category $ \Xcal $, hence also presentable.
	To see that $ \Ucal $ is presentable, note that by \Cref{rec:closed_part_of_recollement_is_kernel_of_jupperstar}, there is a pullback square of \categories
	\begin{equation*}
	    \begin{tikzcd}[sep=2.25em]
	       \Ucal \arrow[dr, phantom, very near start, "\lrcorner", xshift=-0.25em, yshift=0.12em] \arrow[d] \arrow[r, "\jlowersharp", hooked] & \Xcal  \arrow[d, "\iupperstar"] \\ 
	       \pt \arrow[r, "\emptyset"', hooked] & \Zcal \comma
	    \end{tikzcd}
	\end{equation*}
	where the bottom horizontal functor picks out the initial object of $ \Zcal $.
	Note that $ \Xcal $ and $ \Zcal $ are presentable and $ \emptyset \colon \fromto{\pt}{\Zcal} $ and $ \iupperstar $ are left adjoints.
	The presentability of $ \Ucal $ thus follows from the fact that the forgetful functor \smash{$ \fromto{\PrL}{\CATinfty} $} preserves limits \HTT{Proposition}{5.5.3.13}.

	For \eqref{cor:recollements_where_X_is_presentable_etc.2}, first note that since $ \Xcal $ is stable, $ \ilowerstar $ admits a right adjoint $ \iuppershriek $ and both $ \Zcal $ and $ \Ucal $ are stable (see \Cref{ntn:extra_adjoints_in_recollements}).
	By \eqref{cor:recollements_where_X_is_presentable_etc.1}, both $ \Zcal $ and $ \Ucal $ are also presentable.
	Note that since $ \ilowerstar $ and $ \jlowersharp $ are fully faithful, we have $ \iupperstar \ilowerstar \equivalent \id{\Zcal} $ and $ \jupperstar \jlowersharp \equivalent \id{\Ucal} $.
	Since $ \iupperstar $, $ \ilowerstar $, $ \jupperstar $, and $ \jlowersharp $ are all left adjoints, $ \Zcal $ and $ \Ucal $ are retracts of $ \Xcal $ in \smash{$ \PrLst $}.
	Since $ \Xcal $ is dualizable, we deduce that $ \Zcal $ and $ \Ucal $ are also dualizable. 
\end{proof}

We now recall the important fact that continuous localizing invariants split recollements with the property that $ \jupperstar $ is strongly left adjoint.

\begin{lemma}[{\cite[\href{https://arxiv.org/pdf/2405.12169\#theo.4.6}{Proposition 4.6} \& \href{https://arxiv.org/pdf/2405.12169\#theo.1.76}{Remark 1.76}]{arXiv:2405.12169}}]\label{lem:continuous_localizing_invariants_split_recollements_where_jupperstar_is_strongly_left_adjoint}
	Let $ \Xcal $ be a dualizable \category and let $ (\iupperstar \colon \fromto{\Xcal}{\Zcal}, \jupperstar \colon \fromto{\Xcal}{\Ucal}) $ be a recollement.
	If $ \jupperstar $ is strongly left adjoint, then the induced map
	\begin{equation*}
		(\Lcont(\iupperstar),\Lcont(\jupperstar)) \colon \Lcont(\Xcal) \longto \Lcont(\Zcal) \directsum \Lcont(\Ucal)
	\end{equation*}
	is an equivalence. 
\end{lemma}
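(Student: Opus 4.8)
The plan is to rotate the recollement into a localization sequence of dualizable \categories whose structure maps are automatically strongly left adjoint, apply $\Lcont$ to it to obtain a fiber sequence, and then invoke the hypothesis on $\jupperstar$ only to split that fiber sequence. Throughout, $\Xcal$ is stable (being dualizable), hence pointed, and $\Zcal$ and $\Ucal$ are dualizable by \Cref{cor:recollements_where_X_is_presentable_etc}; in particular all the \categories involved lie in $\Prdual$.

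First I would extract the structure that the recollement provides for free, with no extra hypotheses. Since $\Xcal$ is pointed, $\ilowerstar$ admits a right adjoint $\iuppershriek$ (\Cref{ntn:extra_adjoints_in_recollements}), so $\ilowerstar$ is itself a left adjoint and hence $\iupperstar$ is strongly left adjoint. Since $\Zcal$ is stable it has an initial object, so $\jupperstar$ admits a fully faithful left adjoint $\jlowersharp$ (\Cref{ntn:extra_adjoints_in_recollements}), and $\jlowersharp$ is strongly left adjoint because its right adjoint $\jupperstar$ is itself a left adjoint (it has the right adjoint $\jlowerstar$). Thus both $\jlowersharp$ and $\iupperstar$ are morphisms in $\Prdual$. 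Moreover $\iupperstar$ is a Bousfield localization onto $\Zcal$ (it is a left adjoint with fully faithful right adjoint $\ilowerstar$), and by \Cref{rec:closed_part_of_recollement_is_kernel_of_jupperstar} the functor $\jlowersharp$ identifies $\Ucal$ with $\ker(\iupperstar)$. Therefore
\begin{equation*}
	\Ucal \xrightarrow{\jlowersharp} \Xcal \xrightarrow{\iupperstar} \Zcal
\end{equation*}
exhibits $\Zcal$ as the Verdier quotient $\Xcal/\Ucal$; it is a cofiber-fiber sequence in $\Prdual$, so applying $\Lcont$ produces a fiber sequence
\begin{equation*}
	\Lcont(\Ucal) \xrightarrow{\Lcont(\jlowersharp)} \Lcont(\Xcal) \xrightarrow{\Lcont(\iupperstar)} \Lcont(\Zcal)
\end{equation*}
in $\Tcal$.

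Only now would I invoke the hypothesis that $\jupperstar$ is strongly left adjoint: this is exactly what makes $\Lcont(\jupperstar) \colon \Lcont(\Xcal) \to \Lcont(\Ucal)$ defined. Since $\jlowersharp$ is fully faithful we have $\jupperstar \of \jlowersharp \equivalent \id{\Ucal}$, so $\Lcont(\jupperstar) \of \Lcont(\jlowersharp) \equivalent \Lcont(\jupperstar \of \jlowersharp) \equivalent \Lcont(\id{\Ucal}) \equivalent \id{\Lcont(\Ucal)}$; that is, $\Lcont(\jlowersharp)$ admits the retraction $\Lcont(\jupperstar)$. But a fiber sequence $A \to B \to C$ in a stable \category whose first map admits a retraction $r$ is split, and concretely the second map $B \to C$ together with $r$ assemble into an equivalence $B \equivalent C \directsum A$. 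Applying this to the fiber sequence above, with second map $\Lcont(\iupperstar)$ and retraction $\Lcont(\jupperstar)$, shows that
\begin{equation*}
	(\Lcont(\iupperstar), \Lcont(\jupperstar)) \colon \Lcont(\Xcal) \longto \Lcont(\Zcal) \directsum \Lcont(\Ucal)
\end{equation*}
is an equivalence, which is the assertion.

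The step that calls for care is the rotation made in the second paragraph. One should not try to apply $\Lcont$ directly to the recollement sequence $\Zcal \to \Xcal \to \Ucal$ with maps $\ilowerstar$ and $\jupperstar$: for that sequence to lie in $\Prdual$ one would additionally need $\iuppershriek$ to preserve colimits, which genuinely fails in the geometric situations this lemma is meant for. The point is that a recollement is asymmetric — $\iupperstar$ is always strongly left adjoint whereas $\jupperstar$ need not be — so the fiber sequence must be built out of $\jlowersharp$ and $\iupperstar$, with the hypothesis on $\jupperstar$ used only to produce the retraction that splits it. The one substantive input is that the rotated sequence is a cofiber-fiber sequence of \emph{dualizable} \categories in Efimov's sense, which is where the theory of $\Prdual$ enters; this is \cite[Proposition~4.6]{arXiv:2405.12169}.
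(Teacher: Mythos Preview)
Your argument is correct and is essentially the same as the paper's: both rest on the localization sequence $\Ucal \xrightarrow{\jlowersharp} \Xcal \xrightarrow{\iupperstar} \Zcal$ in $\Prdual$ and the splitting it acquires when $\jupperstar$ is strongly left adjoint. The paper simply packages this by invoking \cite[\href{https://arxiv.org/pdf/2405.12169\#theo.1.76}{Remark 1.76}]{arXiv:2405.12169} with $(i_1,i_2)=(\ilowerstar,\jlowersharp)$, whereas you unpack the mechanism---the fiber sequence from \cite[\href{https://arxiv.org/pdf/2405.12169\#theo.4.6}{Proposition 4.6}]{arXiv:2405.12169} together with the retraction $\Lcont(\jupperstar)$ of $\Lcont(\jlowersharp)$---and make explicit that the hypothesis on $\jupperstar$ is used only for the retraction, not for the existence of the fiber sequence.
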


\begin{proof}
	Set $ (i_1,i_2) = (i_*,\jlowersharp) $ in {\cite[\href{https://arxiv.org/pdf/2405.12169\#theo.1.76}{Remark 1.76}]{arXiv:2405.12169}}.
	The conditions there are easily verified; we omit the details.
\end{proof}

%-------------------------------------------------------------------%
%-------------------------------------------------------------------%
%  Semi-orthogonal decompositions                                    %
%-------------------------------------------------------------------%
%-------------------------------------------------------------------%

\section{Semi-orthogonal decompositions}\label{sec:semi-orthogonal_decompositions}

Let $ s \colon \Ccal \to P $ be a functor from \acategory to a poset, and let $ Z \subset P $ be a closed subposet with open complement $ U = P \sminus Z $.
In this section we show that for any presentable stable \category $ \Ecal $, the functor \category $ \Fun(\Ccal,\Ecal) $ decomposes as a recollement of $ \Fun(\Ccal \cross_P Z,\Ecal) $ and $ \Fun(\Ccal \cross_P U,\Ecal) $.
See \Cref{lem:existence_of_recollements_for_functors_out_of_a_layered_category}.
We also explain why this implies that the $ \sharp $-pushforward from the open piece admits an additional left adjoint, and there is another recollement decomposing $ \Fun(\Ccal,\Ecal) $ into $ \Fun(\Ccal \cross_P U,\Ecal) $ and $ \Fun(\Ccal \cross_P Z,\Ecal) $, with the roles of the open and closed swapped.
See \Cref{prop:flipping_recollements,ex:recollement_for_functors_out_of_a_layered_category}.

We begin by fixing some general notation.

\begin{notation}\label{ntn:left_right_Kan_extension}
	Let $ f \colon \Ccal \to \Dcal $ be a functor between \categories, and let $ \Ecal $ be \acategory.
	We write 
	\begin{equation*}
		\fupperstar \colon \Fun(\Dcal,\Ecal) \to \Fun(\Ccal,\Ecal)
	\end{equation*}
	for the functor given by precomposition with $ f $.
	If $ \fupperstar $ admits a left adjoint, we denote it by $ \flowersharp $, and if $ \fupperstar $ admits a right adjoint, we denote it by $ \flowerstar $.
\end{notation}

\begin{notation}
	Let $ s \colon \fromto{\Ccal}{P} $ be a functor from a small \category to a poset.
	Given a subposet $ S \subset P $, we write $ \Ccal_S \colonequals \Ccal \cross_P S $.
	For $ p \in P $, we simply write $ \Ccal_p \colonequals \Ccal_{\{p\}} $.
\end{notation}

\begin{convention}
	Let $ s \colon \fromto{\Ccal}{P} $ be a functor from a small \category to a poset, and let $ Z \subset P $ be a closed subposet with open complement $ U \colonequals P \sminus Z $.
	We write $ i \colon \incto{\Ccal_Z}{\Ccal} $ and $ j \colon \incto{\Ccal_U}{\Ccal} $  for the inclusions.
\end{convention}

Now for our recollement description of $ \Fun(\Ccal,\Ecal) $.
Since we want this result to be as widely applicable as possible (e.g., when $ \Ecal $ is small or not stable), we've stated this result with minimal assumptions on $ \Ecal $. 

\begin{lemma}\label{lem:existence_of_recollements_for_functors_out_of_a_layered_category}
	Let $ s \colon \fromto{\Ccal}{P} $ be a functor from a small \category to a poset, and let $ Z \subset P $ be a closed subposet with open complement $ U \colonequals P \sminus Z $.
	Let $ \Ecal $ be \acategory.
	Then:
	\begin{enumerate}
		\item\label{lem:existence_of_recollements_for_functors_out_of_a_layered_category.1} The pullback functors $ \iupperstar \colon \fromto{\Fun(\Ccal,\Ecal)}{\Fun(\Ccal_Z,\Ecal)} $ and $ \jupperstar \colon \fromto{\Fun(\Ccal,\Ecal)}{\Fun(\Ccal_U,\Ecal)}  $ are jointly conservative.

		\item\label{lem:existence_of_recollements_for_functors_out_of_a_layered_category.2} If $ \Ecal $ admits a terminal object $ \ast $, then $ \iupperstar $ admits a fully faithful right adjoint $ \ilowerstar $ given by
		\begin{equation*}
			\ilowerstar(F)(c) = \begin{cases}
				F(c)\comma & c \in \Ccal_Z \\ 
				\pt\comma & c \notin \Ccal_Z \period
			\end{cases}
		\end{equation*}
		In particular, $ \jupperstar \ilowerstar \colon \fromto{\Fun(\Ccal_Z,\Ecal)}{\Fun(\Ccal_U,\Ecal)} $ is constant with value the terminal object.

		\item\label{lem:existence_of_recollements_for_functors_out_of_a_layered_category.3} If $ \Ecal $ admits an initial object $ \emptyset $, then $ \jupperstar $ admits a fully faithful left adjoint $ \jlowersharp $ given by
		\begin{equation*}
			\jlowersharp(F)(c) = \begin{cases}
				F(c) \comma & c \in \Ccal_U \\ 
				\emptyset \comma & c \notin \Ccal_U \period
			\end{cases}
		\end{equation*}
		In particular, $ \iupperstar \jlowersharp \colon \fromto{\Fun(\Ccal_U,\Ecal)}{\Fun(\Ccal_Z,\Ecal)} $ is constant with value the initial object.

		\item\label{lem:existence_of_recollements_for_functors_out_of_a_layered_category.4} Assume that $ \Ecal $ admits finite limits and $ \jupperstar \colon \fromto{\Fun(\Ccal,\Ecal)}{\Fun(\Ccal_U,\Ecal)} $ admits a fully faithful right adjoint $ \jlowerstar $.
		Then the functors 
		\begin{equation*}
			\iupperstar \colon \fromto{\Fun(\Ccal,\Ecal)}{\Fun(\Ccal_Z,\Ecal)} \andeq \jupperstar \colon \fromto{\Fun(\Ccal,\Ecal)}{\Fun(\Ccal_U,\Ecal)}
		\end{equation*}
		exhibit $ \Fun(\Ccal,\Ecal) $ as the recollement of $ \Fun(\Ccal_Z,\Ecal) $ and $ \Fun(\Ccal_U,\Ecal) $.
	\end{enumerate}
\end{lemma}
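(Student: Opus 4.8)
The plan is to reduce everything to pointwise computations of Kan extensions. Part \eqref{lem:existence_of_recollements_for_functors_out_of_a_layered_category.1} is purely formal: since $P = Z \sqcup U$, every object of $\Ccal$ lies in exactly one of the full subcategories $\Ccal_Z = \Ccal \cross_P Z$ and $\Ccal_U = \Ccal \cross_P U$, and a morphism in $\Fun(\Ccal,\Ecal)$ is an equivalence precisely when it becomes an equivalence after evaluation at each object of $\Ccal$; hence it is detected jointly by $\iupperstar$ and $\jupperstar$. The substance lies in \eqref{lem:existence_of_recollements_for_functors_out_of_a_layered_category.2} and \eqref{lem:existence_of_recollements_for_functors_out_of_a_layered_category.3}, and then \eqref{lem:existence_of_recollements_for_functors_out_of_a_layered_category.4} is a routine check of the axioms in \Cref{rec:recollement}.

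For part \eqref{lem:existence_of_recollements_for_functors_out_of_a_layered_category.2}: the functor $\iupperstar$ is precomposition along $i \colon \Ccal_Z \inclusion \Ccal$, so a right adjoint, if it exists, is computed by right Kan extension along $i$, which I would produce by showing the pointwise right Kan extension exists. Fix $c \in \Ccal$; then $\ilowerstar(F)(c)$ is the limit of $F$ over the comma category of arrows $c \to c'$ with $c' \in \Ccal_Z$. If $s(c) \in Z$, then $\bigl(c, \id{c}\bigr)$ is an initial object of this comma category (here one uses that $\Ccal_Z \subset \Ccal$ is full), so the limit is $F(c)$. If $s(c) \in U$, then any arrow $c \to c'$ would induce $s(c) \leq s(c')$ in $P$, and since $U$ is open this forces $s(c') \in U$, contradicting $c' \in \Ccal_Z$; so the comma category is empty and the limit is the terminal object $\pt$. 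Both kinds of limit exist using only a terminal object and limits over categories with an initial object, so the pointwise right Kan extension exists and has the stated formula; that formula shows $\iupperstar \ilowerstar$ is the identity (so $\ilowerstar$ is fully faithful), and since $\Ccal_Z$ and $\Ccal_U$ are disjoint it shows $\jupperstar \ilowerstar(F)$ is constant with value $\pt$.

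Part \eqref{lem:existence_of_recollements_for_functors_out_of_a_layered_category.3} is dual: $\jlowersharp$ must be left Kan extension along $j \colon \Ccal_U \inclusion \Ccal$, and $\jlowersharp(F)(c)$ is the colimit of $F$ over the comma category of arrows $c' \to c$ with $c' \in \Ccal_U$; if $s(c) \in U$ then $\bigl(c, \id{c}\bigr)$ is terminal there and the colimit is $F(c)$, while if $s(c) \in Z$ an arrow $c' \to c$ would force $s(c') \leq s(c)$, so $s(c') \in Z$ because $Z$ is closed, contradicting $c' \in \Ccal_U$; the comma category is then empty and the colimit is the initial object $\emptyset$. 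Fully faithfulness of $\jlowersharp$ and the claim about $\iupperstar\jlowersharp$ follow exactly as before. For part \eqref{lem:existence_of_recollements_for_functors_out_of_a_layered_category.4}: since $\Ecal$ has finite limits, so does $\Fun(\Ccal,\Ecal)$ (computed pointwise), so it makes sense to ask for a recollement, and I would check the three conditions of \Cref{rec:recollement}: first, $\iupperstar$ has the fully faithful right adjoint $\ilowerstar$ from part \eqref{lem:existence_of_recollements_for_functors_out_of_a_layered_category.2} (a terminal object exists because $\Ecal$ has finite limits) and $\jupperstar$ has the fully faithful right adjoint $\jlowerstar$ by hypothesis; second, $\iupperstar$ and $\jupperstar$ are precomposition functors, hence preserve every limit existing in $\Ecal$ and in particular are left exact, and they are jointly conservative by part \eqref{lem:existence_of_recollements_for_functors_out_of_a_layered_category.1}; third, $\jupperstar\ilowerstar$ is constant at the terminal object by part \eqref{lem:existence_of_recollements_for_functors_out_of_a_layered_category.2}.

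There is no deep step here; the only thing that genuinely requires care — the ``main obstacle'', such as it is — is the case analysis in parts \eqref{lem:existence_of_recollements_for_functors_out_of_a_layered_category.2} and \eqref{lem:existence_of_recollements_for_functors_out_of_a_layered_category.3}: one must correctly use that $U$ is open (resp.\ that $Z$ is closed) to conclude that the relevant comma category is \emph{empty} for $c \notin \Ccal_Z$ (resp.\ $c \notin \Ccal_U$), and one must check that under the deliberately minimal hypotheses on $\Ecal$ the pointwise (co)limits in question actually exist, so that the pointwise Kan extensions really do compute adjoints to $\iupperstar$ and $\jupperstar$.
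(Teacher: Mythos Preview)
Your proof is correct and follows exactly the same approach as the paper, which simply says that item \eqref{lem:existence_of_recollements_for_functors_out_of_a_layered_category.1} holds because equivalences of functors are checked pointwise, items \eqref{lem:existence_of_recollements_for_functors_out_of_a_layered_category.2} and \eqref{lem:existence_of_recollements_for_functors_out_of_a_layered_category.3} follow from the pointwise Kan extension formulas, and item \eqref{lem:existence_of_recollements_for_functors_out_of_a_layered_category.4} is immediate from the previous items. Your version spells out the comma-category case analysis and the verification of the recollement axioms that the paper leaves implicit, but the underlying argument is identical.
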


\begin{proof}
	The first item follows from the fact that equivalences between functors can be checked pointwise. 
	The next two items follow from the formulas for pointwise Kan extensions. 
	Item \eqref{lem:existence_of_recollements_for_functors_out_of_a_layered_category.4} is immediate from the previous items.
\end{proof}

We now explain why given a recollement of stable \categories, the functor $ \iupperstar $ admits a left adjoint if and only if $ \jlowersharp $ also admits a left adjoint.
Moreover, these extra adjoints give rise to a new recollement with the roles of the open and closed pieces swapped.
We start with a convenient lemma.

\begin{lemma}\label{lem:cofibers_of_adjoints}
	Let $ \Ccal $ and $ \Dcal $ be stable \categories and let $ R,R' \colon \Ccal \to \Dcal $ be right adjoint functors with left adjoints $ L $ and $ L' $, respectively.
	Then for any natural transformation $ \alpha \colon R \to R' $ with corresponding natural transformation $ \alphabar \colon L' \to L $:
	\begin{enumerate}
		\item\label{lem:cofibers_of_adjoints.1} The functor $ \cofib(\alphabar \colon L' \to L) $ is left adjoint to $ \fib(\alpha \colon R \to R') $.

		\item\label{lem:cofibers_of_adjoints.2} The functor $ \fib(\alphabar \colon L' \to L) $ is left adjoint to $ \cofib(\alpha \colon R \to R') $.
	\end{enumerate}
\end{lemma}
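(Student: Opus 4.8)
The plan is to convert both assertions into a routine manipulation of fiber and cofiber sequences of mapping spectra, using that $ \Ccal $ and $ \Dcal $ are stable. For $ d \in \Dcal $ and $ c \in \Ccal $ the adjunctions $ L \dashv R $ and $ L' \dashv R' $ furnish natural equivalences of $ \Sp $-valued bifunctors
\begin{equation*}
	\map_{\Ccal}(L(-), -) \equivalent \map_{\Dcal}(-, R(-)) \andeq \map_{\Ccal}(L'(-), -) \equivalent \map_{\Dcal}(-, R'(-)) \comma
\end{equation*}
and the one nontrivial input is the compatibility of the mate correspondence with mapping spectra: under the equivalences above, the map $ \map_{\Ccal}(Ld, c) \to \map_{\Ccal}(L'd, c) $ induced by precomposition with $ \alphabar_d \colon L'd \to Ld $ corresponds to the map $ \map_{\Dcal}(d, Rc) \to \map_{\Dcal}(d, R'c) $ induced by postcomposition with $ \alpha_c \colon Rc \to R'c $, naturally in $ d $ and $ c $. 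This is the standard ``mate square'' (one may even take it as the definition of $ \alphabar $: it is the natural transformation corresponding, via these two adjunctions and the Yoneda lemma, to postcomposition by $ \alpha $), and pinning it down with full naturality in both variables is the step I expect to require the most care. Everything else is formal, since fibers and cofibers in the relevant functor \categories are computed pointwise and $ \map $ out of (resp.\ into) a fixed object is exact.

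For \eqref{lem:cofibers_of_adjoints.1}: pointwise computation of (co)fibers gives $ \cofib(\alphabar)(d) = \cofib(\alphabar_d) $ and $ \fib(\alpha)(c) = \fib(\alpha_c) $, so it suffices to produce a natural equivalence $ \map_{\Ccal}(\cofib(\alphabar_d), c) \equivalent \map_{\Dcal}(d, \fib(\alpha_c)) $ of $ \Sp $-valued bifunctors in $ (d, c) $, as such an equivalence exhibits $ \cofib(\alphabar) $ as left adjoint to $ \fib(\alpha) $. Applying the exact functor $ \map_{\Ccal}(-, c) $ to the cofiber sequence $ L'd \to Ld \to \cofib(\alphabar_d) $ identifies $ \map_{\Ccal}(\cofib(\alphabar_d), c) $ with the fiber of the map $ \map_{\Ccal}(Ld, c) \to \map_{\Ccal}(L'd, c) $ induced by $ \alphabar_d $; by the mate square this is the fiber of the map $ \map_{\Dcal}(d, Rc) \to \map_{\Dcal}(d, R'c) $ induced by $ \alpha_c $; and since $ \map_{\Dcal}(d, -) $ is exact and $ \fib(\alpha_c) \to Rc \to R'c $ is a fiber sequence, that fiber is $ \map_{\Dcal}(d, \fib(\alpha_c)) $. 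Each identification is natural in $ d $ and $ c $, which completes the argument.

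For \eqref{lem:cofibers_of_adjoints.2}: I would deduce it from \eqref{lem:cofibers_of_adjoints.1} by a shift rather than rerun the computation. In any stable \category $ \fib(g) \equivalent \Omega\,\cofib(g) $ for every morphism $ g $, so $ \fib(\alphabar) \equivalent \Omega_{\Ccal} \of \cofib(\alphabar) $; and since $ \Omega_{\Ccal} $ and $ \Sigma_{\Ccal} $ are mutually inverse equivalences of $ \Ccal $, we have $ \Omega_{\Ccal} \dashv \Sigma_{\Ccal} $. Composing this adjunction with $ \cofib(\alphabar) \dashv \fib(\alpha) $ from \eqref{lem:cofibers_of_adjoints.1} yields $ \fib(\alphabar) \equivalent \Omega_{\Ccal} \of \cofib(\alphabar) \dashv \fib(\alpha) \of \Sigma_{\Ccal} $. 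It remains to identify $ \fib(\alpha) \of \Sigma_{\Ccal} $ with $ \cofib(\alpha) $: since $ R $ and $ R' $ are exact they commute with $ \Sigma $, so $ \fib(\alpha)(\Sigma c) \equivalent \Sigma\fib(\alpha_c) \equivalent \cofib(\alpha_c) = \cofib(\alpha)(c) $, using $ \cofib(g) \equivalent \Sigma\fib(g) $ once more; this is natural in $ c $. Hence $ \fib(\alphabar) \dashv \cofib(\alpha) $. (Equivalently, one can repeat the argument for \eqref{lem:cofibers_of_adjoints.1} verbatim with the cofiber sequence $ \fib(\alphabar_d) \to L'd \to Ld $ replacing $ L'd \to Ld \to \cofib(\alphabar_d) $.)
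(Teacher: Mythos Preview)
Your proposal is correct and matches the paper's proof essentially line for line: part \eqref{lem:cofibers_of_adjoints.1} is exactly the mapping-space computation the paper gives, and for part \eqref{lem:cofibers_of_adjoints.2} both you and the paper deduce it from \eqref{lem:cofibers_of_adjoints.1} via a single shift (the paper applies \eqref{lem:cofibers_of_adjoints.1} to the shifted adjunctions $ L[-1] \dashv R[1] $ and $ L'[-1] \dashv R'[1] $, while you postcompose the adjunction from \eqref{lem:cofibers_of_adjoints.1} with $ \Omega_{\Ccal} \dashv \Sigma_{\Ccal} $, which amounts to the same thing).
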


\begin{proof}
	To prove \eqref{lem:cofibers_of_adjoints.1}, let $ c \in \Ccal $ and $ d \in \Dcal $.
	We compute
	\begin{align*}
		\Map_{\Ccal}(\cofib(\alphabar \colon L'(d) \to L(d)),c) &\equivalent \fib \big\lparen\!\!
		\begin{tikzcd}[sep=3em, ampersand replacement = \&]
			\Map_{\Ccal}(L(d),c) \arrow[r, "-\of\alphabar"] \& \Map_{\Ccal}(L'(d),c))
		\end{tikzcd}
		\!\!\big\rparen \\ 
		&\equivalent \fib \big\lparen\!\!
		\begin{tikzcd}[sep=3em, ampersand replacement = \&]
			\Map_{\Dcal}(d,R(c)) \arrow[r, "\alpha\of-"] \& \Map_{\Dcal}(d,R'(c)))
		\end{tikzcd}
		\!\!\big\rparen \\ 
		&\equivalent \Map_{\Dcal}(d, \fib(\alpha \colon R(c) \to R'(c))) \comma
	\end{align*}
	as desired.

	To prove \eqref{lem:cofibers_of_adjoints.2}, first note that since $ \Dcal $ is stable, $ \Fun(\Ccal,\Dcal) $ is stable.
	Also note that for an adjunction $ G \leftadjoint F $ between stable \categories, we have $ G[-1] \leftadjoint F[1] $.
	Moreover for any map $ f $ in a stable \category we have natural equivalences
	\begin{equation*}
		\fib(f) \equivalent \cofib(f)[-1] \andeq \cofib(f) \equivalent \fib(f)[1] \period
	\end{equation*}
	Hence \eqref{lem:cofibers_of_adjoints.2} follows from \eqref{lem:cofibers_of_adjoints.1} applied to the adjunctions $ L[-1] \leftadjoint R[1] $ and $ L'[-1] \leftadjoint R'[1] $.
\end{proof}

\begin{proposition}\label{prop:flipping_recollements}
	Let $ \Xcal $ be a stable \category and let $ (\iupperstar \colon \fromto{\Xcal}{\Zcal}, \jupperstar \colon \fromto{\Xcal}{\Ucal}) $ be a recollement.
	Then:
	\begin{enumerate}
		\item\label{prop:flipping_recollements.2} If $ \iupperstar $ admits a left adjoint $ \ilowersharp \colon \incto{\Zcal}{\Xcal} $, then $ \jlowersharp $ admits a left adjoint $ \jlowersharpL \colon \fromto{\Xcal}{\Ucal} $ defined by the formula
		\begin{equation*}
			\jlowersharpL \colonequals \cofib \big\lparen\!
			\begin{tikzcd}[sep=4em]
				\jupperstar \ilowersharp \iupperstar \arrow[r, "\jupperstar \counit"] & \jupperstar
			\end{tikzcd}
			\!\big\rparen \period
		\end{equation*}

		\item\label{prop:flipping_recollements.3} If $ \jlowersharp $ admits a left adjoint $ \jlowersharpL \colon \fromto{\Xcal}{\Ucal} $, then $ \iupperstar $ admits a left adjoint $ \ilowersharp \colon \fromto{\Zcal}{\Xcal} $ defined by the formula 
		\begin{equation*}
			\ilowersharp \colonequals
			\fib \big\lparen\!
			\begin{tikzcd}[sep=4em]
				\ilowerstar \arrow[r, "\unit\,\ilowerstar"] &  \jlowersharp \jlowersharpL \ilowerstar 
			\end{tikzcd}
			\!\big\rparen \period
		\end{equation*}

		\item\label{prop:flipping_recollements.1} The functor $ \iupperstar $ admits a left adjoint if and only if $ \jlowersharp $ admits a left adjoint.

		\item\label{prop:flipping_recollements.4} If $ \iupperstar $ and $ \jlowersharp $ admit left adjoints, then the functors $ \jlowersharpL \colon \fromto{\Xcal}{\Ucal} $ and $ \iupperstar \colon \fromto{\Xcal}{\Zcal} $ exhibit $ \Xcal $ as the recollement of $ \Zcal $ and $ \Ucal $.
	\end{enumerate}
\end{proposition}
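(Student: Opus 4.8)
The plan is to deduce all four assertions from the single observation that the subcategory $\ker(\iupperstar) \subseteq \Xcal$ is always coreflective, and that each of the two hypotheses in play is equivalent to it being, in addition, reflective. Indeed, by \Cref{rec:closed_part_of_recollement_is_kernel_of_jupperstar} --- which applies because $\Xcal$ stable forces $\Zcal$ to be stable, hence to have an initial object --- the fully faithful functor $\jlowersharp$ identifies $\Ucal$ with $\ker(\iupperstar)$, and $\jupperstar$ is the corresponding coreflection, so the condition that $\jlowersharp$ admits a left adjoint is precisely reflectivity of $\ker(\iupperstar)$. The same is true for $\iupperstar$: if $\ilowersharp \leftadjoint \iupperstar$, then the counit gives a cofiber sequence $\ilowersharp\iupperstar X \to X \to \cofib(\ilowersharp\iupperstar X \to X)$ whose last term lies in $\ker(\iupperstar) = \image(\jlowersharp)$ (apply the exact functor $\iupperstar$, using $\iupperstar\ilowersharp \equivalent \id{\Zcal}$) and is the reflection of $X$ onto that subcategory, since $\Map_{\Xcal}(\ilowersharp\iupperstar X, N) \equivalent \Map_{\Zcal}(\iupperstar X, \iupperstar N) \equivalent 0$ for all $N \in \ker(\iupperstar)$; conversely, if $\jlowersharpL \leftadjoint \jlowersharp$ then $\jlowersharp\jlowersharpL$ is this reflection. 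This already gives the equivalence of conditions in the third assertion, and in each direction the new adjoint appearing in the first two assertions is the reflection, transported appropriately.

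Concretely, for the first assertion: applying the exact functor $\jupperstar$ to the reflection $\cofib(\ilowersharp\iupperstar(-) \to (-))$ and using $\jupperstar\jlowersharp \equivalent \id{\Ucal}$ yields the functor $\jlowersharpL = \cofib(\jupperstar\ilowersharp\iupperstar \to \jupperstar)$ of the displayed formula, and the adjunction $\jlowersharpL \leftadjoint \jlowersharp$ reads off from the universal property of the reflection together with the full faithfulness of $\jlowersharp$: $\Map_{\Xcal}(X, \jlowersharp U) \equivalent \Map_{\Xcal}(\jlowersharp\jlowersharpL X, \jlowersharp U) \equivalent \Map_{\Ucal}(\jlowersharpL X, U)$. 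Alternatively, since $\jupperstar\ilowersharp\iupperstar$ is a composite of left adjoints, hence left adjoint to $\ilowerstar\iupperstar\jlowerstar$, one may apply \Cref{lem:cofibers_of_adjoints} to the adjunctions $\jupperstar \leftadjoint \jlowerstar$ and $\jupperstar\ilowersharp\iupperstar \leftadjoint \ilowerstar\iupperstar\jlowerstar$: it exhibits $\jlowersharpL = \cofib(\jupperstar\ilowersharp\iupperstar \to \jupperstar)$ as left adjoint to the fiber of the conjugate transformation $\jlowerstar \to \ilowerstar\iupperstar\jlowerstar$, which a short triangle-identity computation identifies with $\jlowersharp$ (the recollement cofiber sequence evaluated at $\jlowerstar U$, together with $\jupperstar\jlowerstar \equivalent \id{\Ucal}$).

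For the second assertion, given $\jlowersharpL \leftadjoint \jlowersharp$, I would use the right adjoint $\iuppershriek$ of $\ilowerstar$, which exists because $\Xcal$ is pointed. The recollement cofiber sequence $\jlowersharp\jupperstar X \to X \to \ilowerstar\iupperstar X$ (a consequence of \Cref{rec:closed_part_of_recollement_is_kernel_of_jupperstar}), fed through the exact functor $\iuppershriek$ and simplified via $\iuppershriek\ilowerstar \equivalent \id{\Zcal}$, becomes a cofiber sequence $\iuppershriek\jlowersharp\jupperstar X \to \iuppershriek X \to \iupperstar X$. For $Z \in \Zcal$, applying $\Map_{\Zcal}(Z, -)$ and rewriting the first two terms via $\ilowerstar \leftadjoint \iuppershriek$ and then $\jlowersharpL \leftadjoint \jlowersharp$, $\jlowersharp \leftadjoint \jupperstar$ presents $\Map_{\Zcal}(Z, \iupperstar X)$ as $\Map_{\Xcal}\big(\fib(\ilowerstar Z \to \jlowersharp\jlowersharpL\ilowerstar Z),\, X\big)$, the map being the reflection unit $\id{\Xcal} \to \jlowersharp\jlowersharpL$ at $\ilowerstar Z$. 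Hence $\iupperstar$ admits the left adjoint $\ilowersharp \colonequals \fib(\ilowerstar \to \jlowersharp\jlowersharpL\ilowerstar)$, which carries the same data as the displayed formula (one can again route this through \Cref{lem:cofibers_of_adjoints}, but the direct mapping-space computation is less error-prone); the third assertion follows by combining the first two.

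Finally, for the fourth assertion it remains to check the axioms of \Cref{rec:recollement} for the pair formed by $\jlowersharpL \colon \fromto{\Xcal}{\Ucal}$, in the role of the closed pullback, and $\iupperstar \colon \fromto{\Xcal}{\Zcal}$: $\Xcal$ has finite limits, being stable; $\jlowersharpL$ has the right adjoint $\jlowersharp$ and $\iupperstar$ has the right adjoint $\ilowerstar$, both fully faithful (this is part of the original recollement, cf.\ \Cref{ntn:extra_adjoints_in_recollements}); both $\jlowersharpL$ and $\iupperstar$ are exact --- being left adjoints between stable \categories --- hence left exact, and they are jointly conservative, since $\iupperstar X \equivalent 0$ forces $X \in \image(\jlowersharp)$, whereupon $\jlowersharpL X \equivalent 0$ forces $X \equivalent \jlowersharp(0) \equivalent 0$ (using $\jlowersharpL\jlowersharp \equivalent \id{\Ucal}$); and $\iupperstar\jlowersharp$ is constant at the terminal object because $\image(\jlowersharp) = \ker(\iupperstar)$. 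The place where care is genuinely needed is that the recollement axioms are not symmetric, so the first two assertions are not literally dual and each needs its own, parallel, chain of adjunctions and triangle identities; the structural reason everything works is that $\ker(\iupperstar)$, being always coreflective, becomes a two-sided complemented subcategory as soon as it is reflective, which is exactly what makes the localization $\iupperstar$ split.
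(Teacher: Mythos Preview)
Your proof is correct. The paper's route is to apply \Cref{lem:cofibers_of_adjoints} directly to the standard recollement identities $\jlowersharp \equivalent \fib(\jlowerstar \to \ilowerstar\iupperstar\jlowerstar)$ and its analogue for $\iupperstar$, then verify (4) exactly as you do. You instead organize everything around the single observation that both hypotheses express reflectivity of $\ker(\iupperstar)$, constructing the reflector by hand from the counit of $\ilowersharp \leftadjoint \iupperstar$ (or the unit of $\jlowersharpL \leftadjoint \jlowersharp$) and then reading off the new adjoints via explicit mapping-space manipulations; you mention the lemma-based route only as an alternative for (1). Your framing is more conceptual and self-contained---it makes visible that what is really happening is that $\ker(\iupperstar)$ becomes a two-sided complemented subcategory---while the paper's argument is shorter once the standard identities and \Cref{lem:cofibers_of_adjoints} are in hand.

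Two small remarks. First, the formula you obtain, $\ilowersharp = \fib(\ilowerstar \to \jlowersharp\jlowersharpL\ilowerstar)$, is the one that typechecks as a functor $\Zcal \to \Xcal$; the displayed formula in the statement has $\iupperstar$ where $\ilowerstar$ should appear, so your parenthetical that it ``carries the same data'' is a bit generous. Second, your first paragraph slightly overclaims when it says the equivalence of (3) is already established there: at that point you have shown that each hypothesis implies reflectivity of $\ker(\iupperstar)$, and that reflectivity is tautologically equivalent to the existence of $\jlowersharpL$, but the implication ``reflective $\Rightarrow$ $\ilowersharp$ exists'' is precisely what your detailed argument for (2) supplies afterward.
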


\begin{proof}
	For \eqref{prop:flipping_recollements.2}, we apply \Cref{lem:cofibers_of_adjoints} to the standard equivalence
	\begin{equation*}
		\jlowersharp \equivalent 
		\fib \big\lparen\!
			\begin{tikzcd}[sep=4em]
				\jlowerstar \arrow[r, "\unit\jlowerstar"] &   \ilowerstar \iupperstar \jlowerstar 
			\end{tikzcd}
		\!\big\rparen
	\end{equation*}
	coming from the recollement \cite[\href{https://arxiv.org/pdf/1909.03920\#nul.1.17}{1.17}]{arXiv:1909.03920}. 
	Similarly, for \eqref{prop:flipping_recollements.3}, note that the recollement provides a cofiber sequence
	\begin{equation*}
		\begin{tikzcd}[sep=4em]
			\jlowersharp \jupperstar \arrow[r, "\counit"] & \id{\Xcal} \arrow[r, "\unit"] &  \ilowerstar \iupperstar
		\end{tikzcd}
	\end{equation*}
	\cite[\href{https://arxiv.org/pdf/1909.03920\#nul.1.17}{1.17}]{arXiv:1909.03920}.
	Applying $ \iuppershriek $ to this cofiber sequence and using that $ \ilowerstar $ is fully faithful, we deduce that 
	\begin{equation}\label{eq:cofiber_formula_for_iupperstar}
		\cofib \big\lparen\!
		\begin{tikzcd}[sep=4em]
			\iuppershriek \jlowersharp \jupperstar \arrow[r, "\iuppershriek \counit"] & \iuppershriek 
		\end{tikzcd}
		\!\big\rparen
		\equivalent \iuppershriek \ilowerstar \iupperstar \equivalent \iupperstar \period
	\end{equation}
	So the claim follows by applying \Cref{lem:cofibers_of_adjoints} to the equivalence \eqref{eq:cofiber_formula_for_iupperstar}.

	Item \eqref{prop:flipping_recollements.1} is then immediate from items \eqref{prop:flipping_recollements.2} and \eqref{prop:flipping_recollements.3}. 
	To prove \eqref{prop:flipping_recollements.4}, we verify the conditions in \cref{rec:recollement}:
	\begin{enumerate}
		\item[\textbullet] The functor \smash{$ \jlowersharpL $} has a fully faithful right adjoint $ \jlowersharp $. 
		The functor $ \iupperstar $ has a fully faithful right adjoint $ \ilowerstar $.

		\item[\textbullet] Since $ \Xcal $ is stable and both \smash{$ \jlowersharpL $} and $\iupperstar$ are left adjoints, they are exact. 
		Now we show that they are jointly conservative. 
		Given an object $ x\in\Xcal $ such that both $ \iupperstar(x) =0 $ and \smash{$ \jlowersharpL(x) = 0 $} we will show that $x=0$. 
		Since $ \iupperstar(x) = 0$, we know that $ x\in \image(\jlowersharp)$. 
		So we may write $ x=\jlowersharp(u) $ for some $ u\in\Ucal $. 
		It follows that
		\begin{equation*}
			0 \equivalent \jlowersharpL(x) \equivalent \jlowersharpL \jlowersharp(u) \equivalent u \period
		\end{equation*}
		So $x=0$, as desired.

		\item[\textbullet] The functor $ \iupperstar\jlowersharp $ is left adjoint to $ \jupperstar\ilowerstar=0 $, so itself has to be the zero functor.\qedhere
	\end{enumerate} 
\end{proof}

Here's our main example of when \Cref{prop:flipping_recollements} applies:
	
\begin{example}\label{ex:recollement_for_functors_out_of_a_layered_category}
	Let $ s \colon \fromto{\Ccal}{P} $ be a functor from a small \category to a poset, and let $ Z \subset P $ be a closed subposet with open complement $ U \colonequals P \sminus Z $.
	Let $ \Ecal $ be a stable presentable \category.
	Combining \Cref{lem:existence_of_recollements_for_functors_out_of_a_layered_category,prop:flipping_recollements}, we deduce that there are functors
	\begin{equation*}
		\begin{tikzcd}[sep=6em]
			\Fun(\Ccal_Z,\Ecal) \arrow[r, "\ilowerstar"'{description, xshift=-0.8em}, shift right=2ex, hooked] \arrow[r, "\ilowersharp", shift left=2ex, hooked] & \Fun(\Ccal,\Ecal) \arrow[l, "\iupperstar" {description, xshift=0.8em}] \arrow[l, "\iuppershriek", shift right=-4ex] \arrow[r, "\jupperstar"{description, xshift=0.8em}] \arrow[r, "\jlowersharpL", shift right=-4ex] & \Fun(\Ccal_U,\Ecal) \period \arrow[l, shift left=2ex, hooked', "\jlowerstar"] \arrow[l, shift right=2ex, hooked', "\jlowersharp"{description, xshift=-0.8em}]
		\end{tikzcd}
	\end{equation*}
	Here, all functors lie above their right adjoints.
	Moreover $ \iupperstar $ and $ \jupperstar $ exhibit $ \Fun(\Ccal,\Ecal) $ as the recollement of $ \Fun(\Ccal_Z,\Ecal) $ and $ \Fun(\Ccal_U,\Ecal) $ and \smash{$ \jlowersharpL $} and $ \iupperstar $ exhibit $ \Fun(\Ccal,\Ecal) $ as the recollement of $ \Fun(\Ccal_U,\Ecal) $ and $ \Fun(\Ccal_Z,\Ecal) $.
\end{example}

%-------------------------------------------------------------------%
%-------------------------------------------------------------------%
%  Splitting results                                                %
%-------------------------------------------------------------------%
%-------------------------------------------------------------------%

\section{Splitting results}\label{sec:splitting_results}

We now combine our previous results on semi-orthogonal decompositions to deduce splitting results for localizing invariants of \categories of constructible sheaves.
In \cref{subsec:large_categories}, we treat large \categories of constructible sheaves.
In \cref{subsec:small_categories}, we treat small \categories of constructible sheaves whose stalks are also compact.

\begin{notation}
	Throughout this section, we fix a localizing invariant $ L $ for $\Catperf$.
	We write $ \Lcont $ for the continuous extension of $ L $.
\end{notation}

%-------------------------------------------------------------------%
%  Large ∞-categories                                               %
%-------------------------------------------------------------------%

\subsection{Large \texorpdfstring{$\infty$}{∞}-categories}\label{subsec:large_categories}

We start by using \Cref{lem:continuous_localizing_invariants_split_recollements_where_jupperstar_is_strongly_left_adjoint,prop:flipping_recollements} to prove the following general splitting result.

\begin{proposition}\label{prop:spliting_recollements_with_extra_adjoints}
	Let $ \Xcal $ be a dualizable \category and let $ (\iupperstar \colon \fromto{\Xcal}{\Zcal}, \jupperstar \colon \fromto{\Xcal}{\Ucal}) $ be a recollement.
	If $ \jlowersharp $ admits a left adjoint $ \jlowersharpL $, then the maps
	\begin{align*}
		(\Lcont(\jlowersharpL),\Lcont(\iupperstar)) &\colon \Lcont(\Xcal) \longto \Lcont(\Ucal) \directsum \Lcont(\Zcal) \\ 
	\shortintertext{and}
		(\Lcont(\jlowersharp),\Lcont(\ilowersharp)) &\colon \Lcont(\Ucal) \directsum \Lcont(\Zcal) \longto \Lcont(\Xcal)
	\end{align*}
	are inverse equivalences.
\end{proposition}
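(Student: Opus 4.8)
\emph{Proof plan.} The plan is to derive the proposition from \Cref{prop:flipping_recollements} together with \Cref{lem:continuous_localizing_invariants_split_recollements_where_jupperstar_is_strongly_left_adjoint}, applied to the ``flipped'' recollement.

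First I would apply \enumref{prop:flipping_recollements}{3} to obtain a left adjoint $\ilowersharp \colon \fromto{\Zcal}{\Xcal}$ of $\iupperstar$, and \enumref{prop:flipping_recollements}{4} to see that $\jlowersharpL \colon \fromto{\Xcal}{\Ucal}$ and $\iupperstar \colon \fromto{\Xcal}{\Zcal}$ exhibit $\Xcal$ as a recollement --- now with $\jlowersharpL$ playing the role of the ``closed'' functor (fully faithful right adjoint $\jlowersharp$) and $\iupperstar$ playing the role of the ``open'' functor (fully faithful right adjoint $\ilowerstar$, fully faithful left adjoint $\ilowersharp$). By \enumref{cor:recollements_where_X_is_presentable_etc}{2}, $\Zcal$ and $\Ucal$ are dualizable, so $\Lcont$ is defined on them. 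Moreover, since $\Xcal$ is stable, all the adjunctions in the chains $\ilowersharp \dashv \iupperstar \dashv \ilowerstar \dashv \iuppershriek$ and $\jlowersharpL \dashv \jlowersharp \dashv \jupperstar \dashv \jlowerstar$ exist ($\iuppershriek$ by \Cref{ntn:extra_adjoints_in_recollements}), so each of $\iupperstar$, $\ilowersharp$, $\jlowersharp$, $\jlowersharpL$ is strongly left adjoint and $\Lcont$ is defined on all four functors.

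Next I would note that the ``open'' functor $\iupperstar$ of the flipped recollement is strongly left adjoint: its right adjoint $\ilowerstar$ is itself a left adjoint (of $\iuppershriek$). So \Cref{lem:continuous_localizing_invariants_split_recollements_where_jupperstar_is_strongly_left_adjoint}, applied to the recollement $(\jlowersharpL, \iupperstar)$ of the dualizable \category $\Xcal$, gives that
\[
	(\Lcont(\jlowersharpL), \Lcont(\iupperstar)) \colon \Lcont(\Xcal) \longto \Lcont(\Ucal) \directsum \Lcont(\Zcal)
\]
is an equivalence. Since this map is an equivalence, to identify its inverse with $(\Lcont(\jlowersharp), \Lcont(\ilowersharp))$ it suffices to check that the composite $(\Lcont(\jlowersharpL), \Lcont(\iupperstar)) \of (\Lcont(\jlowersharp), \Lcont(\ilowersharp))$ on $\Lcont(\Ucal) \directsum \Lcont(\Zcal)$ is the identity. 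By functoriality of $\Lcont$ this composite is the $2 \times 2$ matrix with entries $\Lcont(\jlowersharpL\jlowersharp)$, $\Lcont(\jlowersharpL\ilowersharp)$, $\Lcont(\iupperstar\jlowersharp)$, $\Lcont(\iupperstar\ilowersharp)$. Here $\jlowersharpL\jlowersharp \equivalent \id{\Ucal}$ and $\iupperstar\ilowersharp \equivalent \id{\Zcal}$ since $\jlowersharp$ and $\ilowersharp$ are fully faithful, while $\iupperstar\jlowersharp \equivalent 0$ and $\jlowersharpL\ilowersharp \equivalent 0$ because $\jlowersharp$ (resp. $\ilowersharp$) is the inclusion of $\ker(\iupperstar)$ (resp. $\ker(\jlowersharpL)$), by \Cref{rec:closed_part_of_recollement_is_kernel_of_jupperstar} applied to the original (resp. flipped) recollement. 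As $\Lcont$ is pointed it sends the two zero functors to zero, so the matrix is the identity; hence the two displayed maps are inverse equivalences. (Alternatively, the inverse is exactly the map produced by the proof of \Cref{lem:continuous_localizing_invariants_split_recollements_where_jupperstar_is_strongly_left_adjoint} with the choice $(i_1, i_2) = (\jlowersharp, \ilowersharp)$.)

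I do not anticipate a serious obstacle: the real content is already contained in \Cref{prop:flipping_recollements,lem:continuous_localizing_invariants_split_recollements_where_jupperstar_is_strongly_left_adjoint}. The one point requiring care is the bookkeeping around the flipped recollement --- verifying that $\iupperstar$ genuinely occupies the ``open''-functor role and is strongly left adjoint (this is where stability of $\Xcal$ enters, via the existence of $\iuppershriek$), and pairing $\jlowersharp$ and $\ilowersharp$ with the correct fully faithful adjoints so that the inverse comes out in precisely the stated form.
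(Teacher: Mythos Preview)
Your proposal is correct and follows essentially the same approach as the paper: flip the recollement via \Cref{prop:flipping_recollements}, observe that $\iupperstar$ is strongly left adjoint, apply \Cref{lem:continuous_localizing_invariants_split_recollements_where_jupperstar_is_strongly_left_adjoint}, and then identify the inverse using full faithfulness of $\jlowersharp$ and $\ilowersharp$. Your explicit verification that the off-diagonal entries $\Lcont(\iupperstar\jlowersharp)$ and $\Lcont(\jlowersharpL\ilowersharp)$ vanish is a welcome bit of care that the paper's proof leaves implicit.
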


\begin{proof}
	By \Cref{prop:flipping_recollements}, the pair \smash{$ (\jlowersharpL \colon \fromto{\Xcal}{\Ucal}, \iupperstar \colon \fromto{\Xcal}{\Zcal}) $} forms a recollement.
	Moreover, $ \iupperstar $ is strongly left adjoint.
	The fact that the top map is an equivalence thus follows from \Cref{lem:continuous_localizing_invariants_split_recollements_where_jupperstar_is_strongly_left_adjoint}.

	For the bottom map, note that since $ \jlowersharp $ and $ \ilowersharp $ are fully faithful and adjoint to \smash{$ \jlowersharpL $} and $ \iupperstar $, respectively, there are equivalences
	\begin{equation*}
		\jlowersharpL \jlowersharp \equivalent \id{\Ucal} \andeq \iupperstar \ilowersharp \equivalent \id{\Zcal} \period
	\end{equation*}
	Hence 
	\begin{equation*}
		(\Lcont(\jlowersharpL),\Lcont(\iupperstar)) \of (\Lcont(\jlowersharp),\Lcont(\ilowersharp)) \equivalent \id{\Lcont(\Ucal) \directsum \Lcont(\Zcal)} \period
	\end{equation*}
	Since \smash{$ (\Lcont(\jlowersharpL),\Lcont(\iupperstar)) $} is an equivalence, we deduce that \smash{$ (\Lcont(\jlowersharp),\Lcont(\ilowersharp)) $} is also an equivalence and these maps are inverses.
\end{proof}

In the setting of an exodromic stratified \topos with finite stratifying poset, \Cref{prop:spliting_recollements_with_extra_adjoints} implies splitting results for the \category of constructible objects.
In order to deal with infinite stratifying posets, we need a few continuity results. 
The following results can also be phrased in terms of \textit{$ P $-indexed semi-orthogonal decompositions} as in {\cite[\href{https://arxiv.org/pdf/2405.12169\#theo.1.80}{Definition 1.80} \& \href{https://arxiv.org/pdf/2405.12169\#theo.4.14}{Proposition 4.14}]{arXiv:2405.12169}}.
We include this material because it is straightforward from what we have done so far.

\begin{lemma}\label{lem:presheaves_with_values_in_a_dualizable_category_preserves_colimits}
	Let $ \Ecal $ be a presentable \category.
	Then:
	\begin{enumerate}
		\item\label{lem:presheaves_with_values_in_a_dualizable_category_preserves_colimits.1} The functor $ \Fun(-,\Ecal) \colon \fromto{\Catinfty}{\PrL} $
		with functoriality given by left Kan extension preserves colimits.

		\item\label{lem:presheaves_with_values_in_a_dualizable_category_preserves_colimits.2} If $ \Ecal $ is stable, the functor $ \Fun(-,\Ecal) \colon \fromto{\Catinfty}{\PrLst} $
		with functoriality given by left Kan extension preserves colimits.

		\item\label{lem:presheaves_with_values_in_a_dualizable_category_preserves_colimits.3} If $ \Ecal $ is dualizable, the functor $ \Fun(-,\Ecal) \colon \fromto{\Catinfty}{\Prdual} $
		with functoriality given by left Kan extension preserves colimits. 
	\end{enumerate}
\end{lemma}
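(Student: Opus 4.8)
The plan is to reduce all three statements to standard facts about presheaf \categories and the Lurie tensor product, using the natural equivalence
\begin{equation*}
	\Fun(\Ccal,\Ecal) \equivalent \Fun(\Ccal,\An) \tensor \Ecal
\end{equation*}
of presentable \categories (tensor taken in $ \PrL $), which one checks is natural in the small \category $ \Ccal $ with respect to the left Kan extension functoriality on the left-hand side and the functoriality $ f_{!} \tensor \id{\Ecal} $ on the right. This exhibits $ \Fun(-,\Ecal) $ as the composite of $ \Fun(-,\An) \colon \Catinfty \to \PrL $ (left Kan extension functoriality) with $ - \tensor \Ecal \colon \PrL \to \PrL $, and the strategy is to argue that each factor preserves colimits and that the relevant target subcategory is closed under the relevant colimits.

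To prove \eqref{lem:presheaves_with_values_in_a_dualizable_category_preserves_colimits.1}, I would note that $ - \tensor \Ecal $ preserves colimits because $ \PrL $ is closed symmetric monoidal, so $ - \tensor \Ecal $ admits a right adjoint; and $ \Fun(-,\An) $ sends $ \Ccal $ to the free presentable \category on $ \Ccal^{\op} $, hence is the composite of the self-equivalence $ (-)^{\op} $ of $ \Catinfty $ with the left adjoint to the forgetful functor $ \PrL \to \Catinfty $, and so preserves colimits. For \eqref{lem:presheaves_with_values_in_a_dualizable_category_preserves_colimits.2}, when $ \Ecal $ is stable the \category $ \Fun(\Ccal,\Ecal) $ is stable and each $ f_{!} $ is exact, so $ \Fun(-,\Ecal) $ refines to a functor $ \Catinfty \to \PrLst $; since colimits in $ \PrLst $ are computed in $ \PrL $ (for instance because $ \PrLst \equivalent \Mod_{\Sp}(\PrL) $ and $ \Sp $ is an idempotent algebra, so the inclusion $ \PrLst \inclusion \PrL $ preserves colimits), \eqref{lem:presheaves_with_values_in_a_dualizable_category_preserves_colimits.2} follows from \eqref{lem:presheaves_with_values_in_a_dualizable_category_preserves_colimits.1}.

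For \eqref{lem:presheaves_with_values_in_a_dualizable_category_preserves_colimits.3}, I would first check that $ \Fun(-,\Ecal) $ refines to a functor $ \Catinfty \to \Prdual $: if $ \Ecal $ is a retract in $ \PrLst $ of a compactly generated stable \category $ \Acal $, then $ \Fun(\Ccal,\Ecal) $ is a retract of $ \Fun(\Ccal,\An) \tensor \Acal \equivalent \Fun(\Ccal,\Acal) $, which is compactly generated, so $ \Fun(\Ccal,\Ecal) $ is dualizable; and for any $ f \colon \Ccal \to \Dcal $ the right adjoint $ f^{\ast} $ of $ f_{!} $ is computed pointwise, hence preserves all colimits, so $ f_{!} $ is strongly left adjoint. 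It then remains to show this refinement preserves colimits, and I expect this to be the main obstacle, since the morphisms of $ \Prdual $ form a non-full subcategory of $ \PrLst $, so colimits in $ \Prdual $ need not a priori agree with those in $ \PrLst $. The plan here is to invoke Efimov's analysis of colimits in $ \Prdual $ \cite{arXiv:2405.12169}: for a colimit $ \Ccal \equivalent \colim_{i} \Ccal_{i} $ in $ \Catinfty $, the colimit of $ \{\Fun(\Ccal_{i},\Ecal)\}_{i} $ computed in $ \PrLst $ is $ \Fun(\Ccal,\Ecal) $, which we have just seen is dualizable, and the coprojections $ \Fun(\Ccal_{i},\Ecal) \to \Fun(\Ccal,\Ecal) $ are the (strongly left adjoint) left Kan extension functors; under these conditions the $ \PrLst $-colimit should also be the $ \Prdual $-colimit. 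Alternatively, one could pass through the self-equivalence $ (-)^{\vee} \colon \Prdual \equivalent \Prdual^{\op} $ and reduce to a statement about limits, or phrase the argument in the language of $ P $-indexed semi-orthogonal decompositions as in \cite{arXiv:2405.12169}; any of these should give \eqref{lem:presheaves_with_values_in_a_dualizable_category_preserves_colimits.3} once the first two parts are in hand.
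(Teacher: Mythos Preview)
Your argument is correct, but it diverges from the paper in two places.

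For \eqref{lem:presheaves_with_values_in_a_dualizable_category_preserves_colimits.1}, the paper argues on the other side of the $\PrL/\PrR$ equivalence: the functor $\Fun(-,\Ecal) \colon \Catinfty^{\op} \to \PrR$ with \emph{restriction} functoriality preserves limits (because the forgetful functor $\PrR \to \CATinfty$ does), and passing to left adjoints yields the claim. Your tensor-product factorization $\Fun(-,\Ecal) \equivalent \Fun(-,\An) \tensor \Ecal$ is an equally good route and arguably more transparent; just note that your phrasing of $\Fun(-,\An)$ as ``left adjoint to the forgetful $\PrL \to \Catinfty$'' is imprecise (presentable \categories are not small), and should be replaced by the universal property $\Fun^{\Lup}(\PSh(\Ccal),\Dcal) \equivalent \Fun(\Ccal,\Dcal)$ or a direct citation that $\PSh$ preserves colimits.

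For \eqref{lem:presheaves_with_values_in_a_dualizable_category_preserves_colimits.3}, you correctly flag that $\Prdual \subset \PrLst$ is non-full, but your resolution is more roundabout than needed. The paper simply invokes \cite[\href{https://arxiv.org/pdf/2405.12169\#theo.1.65}{Proposition 1.65}]{arXiv:2405.12169}: the forgetful functor $\Prdual \to \PrLst$ preserves colimits. Combined with \eqref{lem:presheaves_with_values_in_a_dualizable_category_preserves_colimits.2} and your check that $\Fun(-,\Ecal)$ lands in $\Prdual$, this finishes immediately. Your proposed criterion (``the $\PrLst$-colimit is dualizable and the coprojections are strongly left adjoint'') is necessary but not obviously sufficient on its own; you would end up invoking the same Efimov result to justify it, so you may as well cite it directly.
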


\begin{proof}
	For \eqref{lem:presheaves_with_values_in_a_dualizable_category_preserves_colimits.1}, note that the functor
	\begin{equation*}
		\Fun(-,\Ecal) \colon \fromto{\Catinfty^{\op}}{\CATinfty}
	\end{equation*}
	with pullback functoriality preserves limits.
	Since the forgetful functor $ \fromto{\PrR}{\CATinfty} $ preserves limits \HTT{Theorem}{5.5.3.18},
	we see that the functor
	\begin{equation*}
		\Fun(-,\Ecal) \colon \fromto{\Catinfty^{\op}}{\PrR}
	\end{equation*}
	with pullback functoriality also preserves limits.
	Passing to left adjoints we deduce the claim.

	Item \eqref{lem:presheaves_with_values_in_a_dualizable_category_preserves_colimits.2} follows from \eqref{lem:presheaves_with_values_in_a_dualizable_category_preserves_colimits.1} and the fact that the forgetful functor $ \fromto{\PrLst}{\PrL} $ preserves colimits.
	Item \eqref{lem:presheaves_with_values_in_a_dualizable_category_preserves_colimits.3} follows from \eqref{lem:presheaves_with_values_in_a_dualizable_category_preserves_colimits.2} and the fact that the forgetful functor $ \fromto{\Prdual}{\PrLst} $ preserves colimits \cite[\href{https://arxiv.org/pdf/2405.12169\#theo.1.65}{Proposition 1.65}]{arXiv:2405.12169}.
\end{proof}

\begin{notation}
	Let $ P $ be a poset.
	Let $ \Subfin(P) $ denote the poset of \textit{finite} subposets of $ P $ ordered by inclusion.
\end{notation}

\begin{corollary}\label{cor:colimit_over_finite_subposets}
	Let $ s \colon \fromto{\Ccal}{P} $ be a functor from a small \category to a poset and let $ \Ecal $ be a dualizable \category. 
	\begin{enumerate}[label=\stlabel{cor:colimit_over_finite_subposets}, ref=\arabic*]
		\item\label{cor:colimit_over_finite_subposets.1} The natural functor $ \displaystyle \fromto{\colim_{Q \in \Subfin(P)} \Ccal_Q}{\Ccal} $ is an equivalence of \categories.

		\item\label{cor:colimit_over_finite_subposets.2} The natural functor
		\begin{equation*}
			\fromto{\colim_{Q \in \Subfin(P)} \Fun(\Ccal_Q,\Ecal)}{\Fun(\Ccal,\Ecal)}
		\end{equation*}
		is an equivalence.
		Here, the functoriality is given by left Kan extension and the colimit is computed in $ \Prdual $.
	\end{enumerate}
\end{corollary}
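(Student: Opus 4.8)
The plan is to prove part~\eqref{cor:colimit_over_finite_subposets.1} essentially by hand and then deduce part~\eqref{cor:colimit_over_finite_subposets.2} formally from it using \enumref{lem:presheaves_with_values_in_a_dualizable_category_preserves_colimits}{3}. For part~\eqref{cor:colimit_over_finite_subposets.1}, first observe that $\Subfin(P)$ is a filtered poset: it contains the empty subposet, and any two finite subposets $Q_{1},Q_{2}\subset P$ lie in $Q_{1}\cup Q_{2}$, which is again finite. Since each $Q\subset P$ is a \emph{full} subposet, the inclusion $Q\inclusion P$ is fully faithful; as fully faithful functors are stable under base change in $\Catinfty$, the functor $\Ccal_{Q}=\Ccal\cross_{P}Q\to\Ccal$ is fully faithful, with essential image the full subcategory of $\Ccal$ spanned by the objects $c$ with $s(c)\in Q$. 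For $Q\subset Q'$ the transition functor $\Ccal_{Q}\inclusion\Ccal_{Q'}$ is again the inclusion of a full subcategory, and every object $c\in\Ccal$ lies in $\Ccal_{Q}$ for the singleton $Q=\{s(c)\}$.

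Thus $\{\Ccal_{Q}\}_{Q\in\Subfin(P)}$ is a filtered system of full subcategories of $\Ccal$ jointly containing all objects, and I claim the canonical functor $\colim_{Q\in\Subfin(P)}\Ccal_{Q}\to\Ccal$ is an equivalence. It is essentially surjective because the core functor $(-)^{\simeq}\colon\Catinfty\to\An$ preserves filtered colimits, so the map $(\colim_{Q}\Ccal_{Q})^{\simeq}\equivalent\colim_{Q}\Ccal_{Q}^{\simeq}\to\Ccal^{\simeq}$ is surjective on $\pi_{0}$; and it is fully faithful because mapping spaces in a filtered colimit of \categories are computed as the filtered colimits of the mapping spaces, so that for $x,y$ lying in some $\Ccal_{Q_{0}}$ (arranged by filteredness) we get $\Map_{\colim_{Q}\Ccal_{Q}}(x,y)\equivalent\colim_{Q\supseteq Q_{0}}\Map_{\Ccal_{Q}}(x,y)\equivalent\Map_{\Ccal}(x,y)$, the last equivalence because every $\Map_{\Ccal_{Q}}(x,y)\to\Map_{\Ccal}(x,y)$ and every transition map is an equivalence. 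Equivalently, the same argument with $\Ccal$ replaced by $P$ gives $\colim_{Q}Q\equivalent P$ in $\Catinfty$, so that part~\eqref{cor:colimit_over_finite_subposets.1} also follows from the fact that filtered colimits commute with finite limits, in particular pullbacks, in $\Catinfty$: $\colim_{Q}\Ccal_{Q}=\colim_{Q}(\Ccal\cross_{P}Q)\equivalent\Ccal\cross_{P}\colim_{Q}Q\equivalent\Ccal$.

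For part~\eqref{cor:colimit_over_finite_subposets.2}, I apply the functor $\Fun(-,\Ecal)\colon\Catinfty\to\Prdual$ with left-Kan-extension functoriality---which preserves colimits by \enumref{lem:presheaves_with_values_in_a_dualizable_category_preserves_colimits}{3}---to the colimit cocone $\{\Ccal_{Q}\to\Ccal\}_{Q\in\Subfin(P)}$ of part~\eqref{cor:colimit_over_finite_subposets.1}. This produces a colimit cocone in $\Prdual$ whose legs $\Fun(\Ccal_{Q},\Ecal)\to\Fun(\Ccal,\Ecal)$ are the left Kan extension functors along the inclusions $\Ccal_{Q}\inclusion\Ccal$; hence $\colim_{Q\in\Subfin(P)}\Fun(\Ccal_{Q},\Ecal)\equivalent\Fun(\Ccal,\Ecal)$, the colimit being taken in $\Prdual$, via exactly the natural comparison functor in the statement.

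I expect no real obstacle here. The only point needing care is the bookkeeping in part~\eqref{cor:colimit_over_finite_subposets.1}---making precise that the $\Catinfty$-colimit of the filtered system of full subcategories $\Ccal_{Q}$ is their union---which rests on the standard facts that the core and mapping-space functors preserve filtered colimits (equivalently, that filtered colimits commute with finite limits in $\Catinfty$). Given that, part~\eqref{cor:colimit_over_finite_subposets.2} is a purely formal consequence of \enumref{lem:presheaves_with_values_in_a_dualizable_category_preserves_colimits}{3}.
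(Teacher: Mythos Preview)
Your proposal is correct and ultimately takes the same approach as the paper: for part~\eqref{cor:colimit_over_finite_subposets.1} you reduce to $\colim_{Q}Q\equivalent P$ and then invoke left exactness of filtered colimits in $\Catinfty$, and part~\eqref{cor:colimit_over_finite_subposets.2} follows formally from \enumref{lem:presheaves_with_values_in_a_dualizable_category_preserves_colimits}{3}. The paper's proof is exactly your second argument for \eqref{cor:colimit_over_finite_subposets.1}; your first, more hands-on verification via the core and mapping-space functors is a fine alternative (and in fact just unpacks what ``filtered colimits commute with finite limits in $\Catinfty$'' means here), but it is not needed once you have the cleaner pullback argument.
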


\begin{proof}
	For \eqref{cor:colimit_over_finite_subposets.1}, first observe that the natural functor
	\begin{equation*}
		\fromto{\colim_{Q \in \Subfin(P)} Q}{P}
	\end{equation*}
	is an equivalence, where the colimit is computed in $ \Catinfty $. 
	Hence the fact that that filtered colimits are left exact in $ \Catinfty $ implies the claim.
	Item \eqref{cor:colimit_over_finite_subposets.2} follows from \eqref{cor:colimit_over_finite_subposets.1} and \Cref{lem:presheaves_with_values_in_a_dualizable_category_preserves_colimits}.
\end{proof}

Now we can deduce our splitting results.

\begin{notation}
	For the following, given a functor $ \Ccal \to P $ from \acategory to a poset and a subposet $ Q \subset P $, write $ i_Q \colon \Ccal_Q \inclusion \Ccal $ for the inclusion
\end{notation}

\begin{corollary}\label{cor:splitting_for_functors}
	Let $ \Ecal $ be a dualizable \category. 
	Let $ s \colon \fromto{\Ccal}{P} $ be a functor from a small \category to a poset.
	If $ P $ is finite or $ L $ is finitary, then the natural map
	\begin{equation*}
		(\Lcont(\iplowersharp))_{p \in P} \colon \Directsum_{p \in P} \Lcont(\Fun(\Ccal_p,\Ecal)) \longto \Lcont(\Fun(\Ccal,\Ecal)) 
	\end{equation*}
	is an equivalence.
\end{corollary}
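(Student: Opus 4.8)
The plan is to prove the finite-poset case by induction on $ |P| $ and then bootstrap to the general case via the continuity statement \enumref{cor:colimit_over_finite_subposets}{2}. Throughout, one uses that $ \Fun(\Ccal,\Ecal) $ is a dualizable \category, which holds because the functor $ \Fun(-,\Ecal) $ lands in $ \Prdual $ by \enumref{lem:presheaves_with_values_in_a_dualizable_category_preserves_colimits}{3}.

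Assume first that $ P $ is finite; the case $ |P| \leq 1 $ is immediate (either both sides vanish, or $ \Ccal = \Ccal_p $ and $ i_p = \id{\Ccal} $). For the inductive step I would choose a minimal element $ p_0 \in P $, so that $ Z \colonequals \{p_0\} $ is a closed subposet with open complement $ U \colonequals P \sminus \{p_0\} $ and $ |U| = |P| - 1 $. By \Cref{ex:recollement_for_functors_out_of_a_layered_category}, the functors $ \iupperstar $ and $ \jupperstar $ exhibit $ \Fun(\Ccal,\Ecal) $ as the recollement of $ \Fun(\Ccal_{p_0},\Ecal) $ and $ \Fun(\Ccal_U,\Ecal) $, and $ \jlowersharp $ admits a left adjoint $ \jlowersharpL $. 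Since $ \Fun(\Ccal,\Ecal) $ is dualizable, \Cref{prop:spliting_recollements_with_extra_adjoints} applies and shows that
\begin{equation*}
	(\Lcont(\jlowersharp),\Lcont(\ilowersharp)) \colon \Lcont(\Fun(\Ccal_U,\Ecal)) \directsum \Lcont(\Fun(\Ccal_{p_0},\Ecal)) \longto \Lcont(\Fun(\Ccal,\Ecal))
\end{equation*}
is an equivalence. Feeding the inductive hypothesis for $ s|_{\Ccal_U} \colon \fromto{\Ccal_U}{U} $ into the first summand then produces an equivalence $ \Directsum_{p \in P} \Lcont(\Fun(\Ccal_p,\Ecal)) \equivalent \Lcont(\Fun(\Ccal,\Ecal)) $.

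The step that requires genuine care — and the one I expect to be the only real (if mild) obstacle — is checking that this inductively built equivalence agrees with the map $ (\Lcont(\iplowersharp))_{p \in P} $ in the statement. For the summand indexed by $ p_0 $: the functor $ \ilowersharp $ is by definition the left adjoint of precomposition along the inclusion $ \Ccal_{p_0} = \Ccal_Z \inclusion \Ccal $, hence equals $ i_{p_0,\sharp} $. For $ p \in U $: the inclusion $ \Ccal_p \inclusion \Ccal $ factors through $ j \colon \Ccal_U \inclusion \Ccal $, so transitivity of left Kan extension identifies $ i_{p,\sharp} $ with the composite of left Kan extension along $ \Ccal_p \inclusion \Ccal_U $ followed by $ \jlowersharp $ — which is exactly the map produced by the inductive identification of $ \Lcont(\Fun(\Ccal_U,\Ecal)) $ after postcomposition with $ \jlowersharp $. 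Finally, for the general case with $ L $ finitary, $ \Lcont $ is finitary and hence commutes with the filtered colimit $ \Fun(\Ccal,\Ecal) \equivalent \colim_{Q \in \Subfin(P)} \Fun(\Ccal_Q,\Ecal) $; the finite case identifies each $ \Lcont(\Fun(\Ccal_Q,\Ecal)) $ with $ \Directsum_{p \in Q} \Lcont(\Fun(\Ccal_p,\Ecal)) $, the same Kan-extension transitivity shows the transition maps for $ Q \subset Q' $ are the evident summand inclusions, and since every finite subset of $ P $ is a finite (full) subposet the colimit over $ \Subfin(P) $ is $ \Directsum_{p \in P} \Lcont(\Fun(\Ccal_p,\Ecal)) $, with comparison map $ (\Lcont(\iplowersharp))_{p \in P} $ by construction.
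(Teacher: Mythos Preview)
Your proof is correct and follows essentially the same approach as the paper's: both induct over the poset, split off a closed piece via the recollement of \Cref{ex:recollement_for_functors_out_of_a_layered_category}, apply \Cref{prop:spliting_recollements_with_extra_adjoints}, and then pass to the filtered colimit in the finitary case. The only cosmetic difference is that you induct on $|P|$ by removing a single minimal element at each step, whereas the paper inducts on the Krull dimension of $P$ by removing all minimal elements at once; your identification of the comparison map via transitivity of left Kan extension is exactly the paper's ``$i \mapsto \ilowersharp$ respects composition'' argument, spelled out a bit more explicitly.
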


\begin{proof}
	First we treat the case where $ P $ is finite.
	We prove the claim by induction on the Krull dimension of $ P $.
	If $ \dim(P) = 0 $, then $ P $ is just a finite set; the claim then follows from the fact that localizing invariants preserve direct sums.
	For the induction step, let $ Z \subset P $ denote the subset of minimal elements, and let $ U \colonequals P \sminus Z $.
	Then $ Z $ is closed and $ \dim(Z) = 0 $ and $ U $ is open and $ \dim(U) = \dim(P) - 1 $.
	Hence it suffices to show that the natural map
	\begin{equation*}
		(\Lcont(i_{Z,\sharp}),\Lcont(i_{U,\sharp})) \colon \Lcont(\Fun(\Ccal_Z,\Ecal)) \directsum \Lcont(\Fun(\Ccal_U,\Ecal)) \longto \Lcont(\Fun(\Ccal,\Ecal))
	\end{equation*}
	is an equivalence.
	By \Cref{ex:recollement_for_functors_out_of_a_layered_category}, this is a special case of \Cref{prop:spliting_recollements_with_extra_adjoints} applied to $ \Xcal = \Fun(\Ccal,\Ecal) $, $ \Zcal = \Fun(\Ccal_Z,\Ecal) $, and $ \Ucal = \Fun(\Ccal_U,\Ecal) $.

	Now we treat the case where $ L $ is finitary.
	Using the finitaryness of $ L $, the case of a finite stratifying poset, and \Cref{cor:colimit_over_finite_subposets} we compute
	\begin{align*}
		\Lcont(\Fun(\Ccal,\Ecal)) &\equivalent \Lcont\paren{\colim_{Q \in \Subfin(P)} \Fun(\Ccal_Q,\Ecal)} \\ 
		&\equivalent \colim_{Q \in \Subfin(P)} \Lcont(\Fun(\Ccal_Q,\Ecal)) \\ 
		&\equivalent \colim_{Q \in \Subfin(P)} \Directsum_{q \in Q} \Lcont(\Fun(\Ccal_q,\Ecal)) \\
		&\equivalent \Directsum_{p \in P} \Lcont(\Fun(\Ccal_p,\Ecal)) \period
	\end{align*}
	Finally, to see that the equivalence is induced by direct sum of the maps $ \Lcont(i_{p,\sharp}) $, it suffices to show the assignment $ i \mapsto \ilowersharp $ respects composition: this follows from that its right adjoint $\iupperstar$, which as a restriction of functors, respects composition.
\end{proof}

\begin{corollary}\label{cor:splitting_for_exodromic_stratified_topoi}
	Let $ (\Xcal,P) $ be an exodromic stratified \topos and $ \Ecal $ be a dualizable \category.
	If $ P $ is finite or $ L $ is fintary, then there is a natural equivalence
	\begin{equation*}
		\Lcont(\ConsP(\Xcal;\Ecal)) \equivalent \Directsum_{p \in P} \Lcont(\Fun(\Piinfty(\Xcal_p),\Ecal)) \period
	\end{equation*}
	Here, $ \Piinfty(\Xcal_p) $ denotes the shape of the \topos $ \Xcal_p $.
\end{corollary}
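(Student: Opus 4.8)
The plan is to simply feed the exodromy equivalence into \Cref{cor:splitting_for_functors}. By definition of an exodromic stratified \topos, there is a small \category $ \Piinfty(\Xcal,P) $ together with a natural equivalence $ \ConsP(\Xcal;\Ecal) \equivalent \Fun(\Piinfty(\Xcal,P),\Ecal) $, and $ \Piinfty(\Xcal,P) $ comes equipped with a (conservative) functor $ s \colon \Piinfty(\Xcal,P) \to P $ to the stratifying poset. So I would set $ \Ccal \colonequals \Piinfty(\Xcal,P) $ and apply \Cref{cor:splitting_for_functors} to the functor $ s $: since $ \Ccal $ is small and $ \Ecal $ is dualizable, and since $ P $ is assumed finite or $ L $ is assumed finitary, that corollary gives a natural equivalence
\begin{equation*}
	\Lcont(\Fun(\Ccal,\Ecal)) \equivalent \Directsum_{p \in P} \Lcont(\Fun(\Ccal_p,\Ecal)) \comma
\end{equation*}
where $ \Ccal_p = s^{-1}(p) $.

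It then remains to identify the fibers $ \Ccal_p $ with the shapes of the strata. This is exactly property (2) of the exit-path \category: for the locally closed (here just closed, being a point) subposet $ \{p\} \subset P $, there is a natural equivalence $ \Piinfty(\Xcal_{\{p\}},\{p\}) \equivalent \Piinfty(\Xcal,P) \cross_P \{p\} = \Ccal_p $, and since $ \{p\} $ is a one-point poset, $ \Piinfty(\Xcal_p,\{p\}) $ is just the shape $ \Piinfty(\Xcal_p) $ of the $ p $-th stratum. Substituting this identification and the exodromy equivalence $ \ConsP(\Xcal;\Ecal) \equivalent \Fun(\Ccal,\Ecal) $ into the displayed equivalence yields the claim.

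I do not expect a serious obstacle here; the only points requiring a word of care are that $ \Piinfty(\Xcal,P) $ is genuinely small (so that \Cref{cor:splitting_for_functors} applies), which is part of the definition of exodromic, and that all the equivalences invoked are natural in $ \Ecal $ — the exodromy equivalence is natural by construction, the identification of fibers with strata is natural by the functoriality in property (2)/(3), and the splitting in \Cref{cor:splitting_for_functors} is natural. If one wishes to record that the equivalence is implemented by the expected constructible-sheaf functors (the left adjoints $ \iconsSlowersharp $ to $ * $-pullback along strata inclusions, corresponding under exodromy to left Kan extension along $ \Ccal_p \inclusion \Ccal $), this follows from the compatibility of the exodromy equivalence with $ * $-pullback and the passage to left adjoints, exactly as in the square displayed in the functoriality property of \cref{subsec:exodromy}.
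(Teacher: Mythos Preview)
Your proposal is correct and follows essentially the same approach as the paper: apply the exodromy equivalence to identify $\ConsP(\Xcal;\Ecal)$ with $\Fun(\Piinfty(\Xcal,P),\Ecal)$, invoke \Cref{cor:splitting_for_functors} with $\Ccal = \Piinfty(\Xcal,P)$, and then use the identification of the fibers $\Piinfty(\Xcal,P)_p$ with the shapes $\Piinfty(\Xcal_p)$. The paper's proof is slightly terser and cites the specific external references for the exodromy equivalence and the fiber identification, but the logical content is the same.
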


\begin{proof}
	Since $ (\Xcal,P) $ is exodromic and $ \Ecal $ is dualizable, \cite[\href{https://arxiv.org/pdf/2401.12825\#equation.4.1.15}{Corollary 4.1.15}]{arXiv:2401.12825} shows that we have an exodromy equivalence with $ \Ecal $-coefficients
	\begin{equation*}
		\ConsP(\Xcal;\Ecal) \equivalent \Fun(\Piinfty(\Xcal,P),\Ecal) \period
	\end{equation*}
	Moreover, \cite[\href{https://arxiv.org/pdf/2401.12825\#equation.3.1.17}{Corollary 3.1.17}]{arXiv:2401.12825} shows that for each $ p \in P $, we have a natural equivalence 
	\begin{equation*}
		\Piinfty(\Xcal,P)_p \equivalent \Piinfty(\Xcal_p) \period
	\end{equation*}
	Hence the claim is immediate from \Cref{cor:splitting_for_functors} applied $ \Ccal = \Piinfty(\Xcal,P) $.
\end{proof}

We conclude this subsection with some applications to $ \Aup $-theory and topological Hochschild homology.

\begin{notation}
	Let $ X $ be an anima.
	We write $ \Lcal X = \Map(\Sup^1,X) $ for the anima of free loops on $ X $.
	Recall that the \textit{$ \Aup $-theory} of $ X $ is the \Ktheory spectrum
	\begin{equation*}
		\Aup(X) \colonequals \Kup(\Fun(X,\Sp)^{\upomega}) \equivalent \Kcont(\Fun(X,\Sp)) \in \Sp \period
	\end{equation*}
\end{notation}

\begin{corollary}\label{cor:K-theory_and_THH}
	Let $ (\Xcal,P) $ be an exodromic stratified \topos.
	Then there are natural equivalences 
	\begin{equation*}
		\Kcont(\ConsP(\Xcal;\Sp))\equivalent \Directsum_{p \in P} \Aup(\Piinfty(\Xcal_p)) 
		\quad \text{and} \quad
		\THH^{\cont}(\ConsP(\Xcal;\Sp))\equivalent \Directsum_{p \in P}\Sigma_+^\infty \Lcal \Piinfty(\Xcal_p) \period
	\end{equation*}
\end{corollary}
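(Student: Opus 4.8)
The plan is to obtain both formulas by specializing \Cref{cor:splitting_for_exodromic_stratified_topoi} to the two classical finitary localizing invariants $L = \Kup$ and $L = \THH$ valued in $\Sp$: for $\Kup$ this is recalled above, and $\THH$ is likewise a finitary localizing invariant. Since these $L$ are finitary, no finiteness hypothesis on $P$ is needed, and \Cref{cor:splitting_for_exodromic_stratified_topoi} yields natural equivalences
\begin{align*}
	\Kcont(\ConsP(\Xcal;\Sp)) &\equivalent \Directsum_{p \in P} \Kcont(\Fun(\Piinfty(\Xcal_p),\Sp)) \\
	\THH^{\cont}(\ConsP(\Xcal;\Sp)) &\equivalent \Directsum_{p \in P} \THH^{\cont}(\Fun(\Piinfty(\Xcal_p),\Sp)) \period
\end{align*}
It then remains to identify the summands, using that each $\Piinfty(\Xcal_p)$ is an anima.

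For \Ktheory there is nothing to do beyond unwinding definitions: by definition $\Aup(X) = \Kcont(\Fun(X,\Sp))$ for an anima $X$, so the first displayed equivalence is already the claimed one. For $\THH$, I would show that $\THH^{\cont}(\Fun(X,\Sp)) \equivalent \Sigma_+^\infty \Lcal X$ for every anima $X$ in two steps. First, applying \Cref{cor:splitting_for_functors} once more along the projection $X \to \pi_0(X)$ to the discrete poset of components (legitimate since $\THH$ is finitary) reduces the claim to the case that $X$ is connected. Second, for connected $X$ the equivalence $\Fun(X,\Sp) \equivalent \Mod_{\Sigma_+^\infty \Omega X}$ exhibits this \category as compactly generated with compacts the perfect $\Sigma_+^\infty \Omega X$-modules, so by Morita invariance of $\THH$ we get $\THH^{\cont}(\Fun(X,\Sp)) \equivalent \THH(\Sigma_+^\infty \Omega X)$, and the classical computation of the topological Hochschild homology of a spherical group ring identifies this with $\Sigma_+^\infty \Lcal X$ (using $B\Omega X \equivalent X$). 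Reassembling the direct sum over $\pi_0(X)$ and using $\Lcal X \equivalent \coprod_{[x] \in \pi_0(X)} \Lcal X_{[x]}$ completes the identification.

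I do not expect a serious obstacle: the only ingredient outside this paper is the classical identification of $\THH$ of a spherical group ring with the suspension spectrum of the free loop space, and everything else is a formal consequence of \Cref{cor:splitting_for_exodromic_stratified_topoi,cor:splitting_for_functors} together with the definition of $\Aup$-theory. Naturality of the asserted equivalences follows from the naturality already built into \Cref{cor:splitting_for_exodromic_stratified_topoi} and the naturality of the stratumwise identifications used.
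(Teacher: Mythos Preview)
Your proposal is correct and follows essentially the same approach as the paper: both apply \Cref{cor:splitting_for_exodromic_stratified_topoi} with the finitary invariants $\Kup$ and $\THH$, identify the $\Kup$-summands with $\Aup$-theory by definition, and identify the $\THH$-summands via the equivalence $\THH^{\cont}(\Fun(X,\Sp)) \equivalent \Sigma_+^\infty \Lcal X$ for anima $X$. The only difference is cosmetic: the paper simply cites this last equivalence as a known fundamental calculation, whereas you sketch its proof (reduction to connected $X$ via \Cref{cor:splitting_for_functors}, then Morita invariance and the spherical group ring computation).
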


\begin{proof}
	The first equivalence is immediate from \Cref{cor:splitting_for_exodromic_stratified_topoi} and the fact that \Ktheory is finitary.
	For the second equivalence, note that since $ \THH $ is finitary, by \Cref{cor:splitting_for_exodromic_stratified_topoi}, it suffices to compute $\THH(\Fun(\Piinfty(\Xcal_p);\Sp))$ for each $ p\in P $. 
	Now we recall the fundamental calculation (see, for example, \cite[\href{https://publications.mfo.de/bitstream/handle/mfo/3637/OWR_2018_15.pdf?sequence=1&isAllowed=y\#page=64}{Corollary 5}]{MR3941522:Land}) that for an anima $ X $, we have a natural equivalence of spectra
	\begin{equation*}
		\THH^{\cont}(\Fun(X,\Sp)) \equivalent \Sigma_+^\infty \Lcal X \period \qedhere
	\end{equation*}
\end{proof}

\begin{nul}
	It may be surprising that $\THH^{\cont} $ of the \category of constructible sheaves only sees free loops traveling inside each stratum, but not free loops traveling through different strata. 
\end{nul}

\begin{remark}[(the lattice conjecture)] \label{rem:deducing_lattice_conjecture_from_strata}
	There are more complicated localizing invariants whose values on \categories of constructible sheaves are interesting.
	For example, periodic cyclic homology $\mathrm{HP}$ and topological \Ktheory $\Kup^{\mathrm{top}}$. 
	Let us briefly mention that Blanc's \textit{lattice conjecture} \cite[\href{https://arxiv.org/pdf/1211.7360\#prop.1.7}{Conjecture 1.7}]{MR3477639} is formulated with these invariants. 
	More precisely, for \smash{$\Ccal\in\Catperf$} which is $\CC$-linear, it asks if the Chern character
	\begin{equation*}
		\Kup^{\mathrm{top}}(\Ccal) \tensor \CC \longrightarrow \mathrm{HP}(\Ccal)
	\end{equation*}
	is an equivalence.
	Given that the conjecture has been proven for $\Ccal$ the \category of (compact objects in) $\Mod_\CC$-valued local systems on a certain class of topological spaces \cite[\href{https://arxiv.org/pdf/2102.01566\#prop.6.16}{Proposition 6.16}]{arxiv:2102.01566}, our result shows that the same is true for the \category of $\Mod_\CC$-valued constructible sheaves on certain stratified topological spaces.
	More precisely, for an exodromic stratified topological space $(X,P)$ with finite $P$, such that each stratum $X_p$ falls into the above class (whose \category of local systems $\LC(X_p;\Mod_\CC)^\upomega$ satisfies the lattice conjecture), the \category of constructible sheaves $\ConsP(X;\Mod_\CC)^\upomega$ also satisfies the lattice conjecture. To show this, given our results above, it suffices to note that both sides decompose into direct sums, and the Chern character map induces equivalences between corresponding summands by assumption.	Compare \cite[\href{https://arxiv.org/pdf/2102.01566\#prop.6.18}{Remark 6.18}]{arxiv:2102.01566}. 
\end{remark}

%-------------------------------------------------------------------%
%  Small ∞-categories                                               %
%-------------------------------------------------------------------%

\subsection{Small \texorpdfstring{$\infty$}{∞}-categories}\label{subsec:small_categories}

We now explain splitting results for small \categories of constructible sheaves `with compact stalks'.
Since a general \topos need not have enough points, it is better to formulate a definition using locally constant sheaves that agrees with the stalk-wise definition for exodromic \topoi.

\begin{definition}
	Given \atopos $ \Xcal $ and a presentable \category $ \Ecal $, write $ \Gammaupperstar_{\Xcal} \colon \Ecal \to \Sh(\Xcal;\Ecal) $ for the constant sheaf functor, i.e., the left adjoint to global sections.
	Let $ \Ecal_0 \subset \Ecal $ be a full subcategory. 
	We write
	\begin{equation*}
		\LC(\Xcal;\Ecal_0) \subset \Sh(\Xcal;\Ecal)
	\end{equation*}
	for the full subcategory spanned by those objects $ F $ such that there exists an effective epimorphism $ \coprod_{i \in I} U_i \surjection 1_{\Xcal} $ in $ \Xcal $ and for each $ i \in I $ there exists an object $ e_i \in \Ecal_{0} $ and an equivalence
	\begin{equation*}	
		\jupperstar_{i}(F) \equivalent \Gammaupperstar_{\Xcal_{/U_i}}(e_i)
	\end{equation*}
	in $ \Sh(\Xcal_{/U_i};\Ecal) $.
	Here, \smash{$ \jupperstar_{i} \colon \Sh(\Xcal;\Ecal) \to \Sh(\Xcal_{/U_i};\Ecal) $} denotes the pullback functor induced by \smash{$ U_i \cross (-) \colon \Xcal \to \Xcal_{/U_i} $}.
\end{definition}

\begin{definition}
	Let $ (\Xcal,P) $ be a stratified \topos, let $ \Ecal $ be a presentable \category, and let $ \Ecal_0 \subset \Ecal $ be a full subcategory.
	We write 
	\begin{equation*}
		\ConsP(\Xcal;\Ecal_0) \subset \Sh(\Xcal;\Ecal)
	\end{equation*}
	for the full subcategory spanned by those objects $ F $ such that for each $ p \in P $, the restriction $ \iupperstar_p(F) $ is the in full subcategory $ \LC(\Xcal_p;\Ecal_0) \subset \Sh(\Xcal_p;\Ecal) $.
\end{definition}

\begin{nul}
	If $ (\Xcal,P) $ is an exodromic stratified \topos and $ \Ecal $ is a compactly generated presentable \category with compact objects \smash{$ \Ecal^\upomega \subset \Ecal $}, then the exodromy equivalence 
	\begin{align*}
		\ConsP(\Xcal;\Ecal) &\equivalent \Fun(\Piinfty(\Xcal,P),\Ecal) \\ 
	\intertext{restricts to an equivalence} 
		\ConsP(\Xcal;\Ecal^\upomega) &\equivalent \Fun(\Piinfty(\Xcal,P),\Ecal^\upomega) \period
	\end{align*}
	In the setting of hypersheaves on an exodromic stratified space $ (X,P) $ with locally weakly contractible strata, the full subcategory $ \ConsP(\Xcal;\Ecal^\upomega) \subset \ConsP(\Xcal;\Ecal) $ coincides with the full subcategory spanned by those objects whose stalks are compact objects of $ \Ecal $.
	Compare to \Cite[\href{https://arxiv.org/pdf/2401.12825\#equation.5.4.7}{Observation 5.4.7}]{arXiv:2401.12825}. 
\end{nul}

\begin{notation}
	In the rest of this subsection, we fix a compactly generated dualizable \category $ \Ecal $, and consider the subcategory of compact objects \smash{$ \Ecal^\upomega \in \Catperf $}. 
\end{notation}

\begin{lemma}\label{lem:splitting_for_small_categories}
	Let $ s \colon \Ccal \to P $ be a functor from a small \category to a poset. 
	Let $ Z \subset  P $ be a closed poset and $ U \subset P $ be the open complement.
	\begin{enumerate}
		\item The functors from \Cref{ex:recollement_for_functors_out_of_a_layered_category} restrict to a (one-sided) split Verdier sequence in $\Catperf$
		\begin{equation*}
			\begin{tikzcd}[sep=6em]
				\Fun(\Ccal_Z,\Ecal^\upomega) \arrow[r, "\ilowerstar"', shift right=2ex, hooked] & \Fun(\Ccal,\Ecal^\upomega) \arrow[l, "\iupperstar"'] \arrow[r, "\jupperstar"'] & \Fun(\Ccal_U,\Ecal^\upomega) \period \arrow[l, shift right=2ex, hooked', "\jlowersharp"']
			\end{tikzcd}
		\end{equation*}

		\item The maps $ L(\iupperstar) $ and $ L(\jupperstar) $ induce an equivalence
		\begin{equation*}
			L(\Fun(\Ccal,\Ecal^\upomega)) \equivalence L(\Fun(\Ccal_Z,\Ecal^\upomega))\oplus L(\Fun(\Ccal_U,\Ecal^\upomega)) \period
		\end{equation*}

		\item If the poset $ P $ is finite, the maps $L(\iupperstar_p)$ induce a natural equivalence
		\begin{equation*}
			L(\Fun(\Ccal,\Ecal^\upomega)) \equivalence \Directsum_{p \in P} L(\Fun(\Ccal_p,\Ecal^\upomega)) \period 
		\end{equation*}
	\end{enumerate}
\end{lemma}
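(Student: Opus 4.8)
The plan is to deduce all three statements from the recollement of large functor categories in \Cref{ex:recollement_for_functors_out_of_a_layered_category}, after identifying which parts of that structure survive the passage from $\Ecal$ to its subcategory of compact objects $\Ecal^{\upomega}$. For (1), the key point is that exactly four of the functors in \Cref{ex:recollement_for_functors_out_of_a_layered_category} carry $\Ecal^{\upomega}$-valued functors to $\Ecal^{\upomega}$-valued functors: the restriction functors $\iupperstar$ and $\jupperstar$ do so tautologically, and $\ilowerstar$ and $\jlowersharp$ do so because, by the pointwise formulas of \Cref{lem:existence_of_recollements_for_functors_out_of_a_layered_category}, they are given by extension by the zero object of $\Ecal$ (the terminal, resp.\ initial, object of $\Ecal$, which coincide and lie in $\Ecal^{\upomega}$ since $\Ecal$ is stable). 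The remaining functors $\ilowersharp$, $\iuppershriek$, $\jlowerstar$, and $\jlowersharpL$ are honest left or right Kan extensions whose pointwise values are infinite (co)limits; these do \emph{not} preserve $\Ecal^{\upomega}$-valued functors in general, which is precisely why only a one-sided splitting can survive.

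Granting this, since $\Ecal^{\upomega}$ is small, stable, and idempotent complete, so is $\Fun(\Ccal',\Ecal^{\upomega})$ for any small $\Ccal'$, so all the functor categories in sight lie in $\Catperf$. The adjunctions $\iupperstar \dashv \ilowerstar$ and $\jlowersharp \dashv \jupperstar$, the recollement identities ($\iupperstar\ilowerstar$ and $\jupperstar\jlowersharp$ are the identities, and $\jupperstar\ilowerstar$ and $\iupperstar\jlowersharp$ are zero), and the recollement cofiber sequence $\jlowersharp\jupperstar \to \mathrm{id} \to \ilowerstar\iupperstar$ (a cofiber sequence pointwise, hence also after restriction) all carry over to these subcategories. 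From this I would conclude that the sequence $\Fun(\Ccal_Z,\Ecal^{\upomega}) \to \Fun(\Ccal,\Ecal^{\upomega}) \to \Fun(\Ccal_U,\Ecal^{\upomega})$, with maps $\ilowerstar$ and $\jupperstar$, is a Verdier sequence: $\ilowerstar$ is fully faithful with essential image $\ker(\jupperstar)$, and the restricted recollement cofiber sequence together with the semiorthogonality $\map(\jlowersharp(-),\ilowerstar(-)) \equivalent 0$ (which follows from $\jupperstar\ilowerstar \equivalent 0$ and $\jlowersharp \dashv \jupperstar$) identifies $\jupperstar$ with the Verdier quotient by $\ker(\jupperstar)$. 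It is split on one side, witnessed by the retraction $\iupperstar$ of $\ilowerstar$ and the fully faithful section $\jlowersharp$ of $\jupperstar$.

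For (2): since $L$ is a localizing invariant it sends this cofiber--fiber sequence to a fiber sequence $L(\Fun(\Ccal_Z,\Ecal^{\upomega})) \to L(\Fun(\Ccal,\Ecal^{\upomega})) \to L(\Fun(\Ccal_U,\Ecal^{\upomega}))$, which splits because $L(\jlowersharp)$ is a section of $L(\jupperstar)$. Thus $(L(\ilowerstar),L(\jlowersharp))$ is an equivalence out of the direct sum; composing with $(L(\iupperstar),L(\jupperstar))$ and using the recollement identities identifies the composite with the identity of $L(\Fun(\Ccal_Z,\Ecal^{\upomega})) \directsum L(\Fun(\Ccal_U,\Ecal^{\upomega}))$, so $(L(\iupperstar),L(\jupperstar))$ is the desired equivalence. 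For (3), I would induct on the Krull dimension of $P$, exactly as in the proof of \Cref{cor:splitting_for_functors}: when $\dim P = 0$ the poset $P$ is discrete, $\Ccal \equivalent \coprod_p \Ccal_p$ and $\Fun(\Ccal,\Ecal^{\upomega}) \equivalent \prod_p \Fun(\Ccal_p,\Ecal^{\upomega})$, and $L$ of a finite product is the corresponding direct sum, via the projections $\iupperstar_p$; for the inductive step, take $Z \subset P$ the (discrete, closed) subposet of minimal elements and $U \colonequals P \sminus Z$ (open, of Krull dimension $\dim P - 1$), apply (2), then the base case to $\Ccal_Z \to Z$ and the inductive hypothesis to $\Ccal_U \to U$, and observe that the composite equivalence is induced by the restrictions $\iupperstar_p$, using that restriction functors compose.

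The only real obstacle is (1): pinning down exactly which functors survive the restriction to $\Ecal^{\upomega}$-valued functors, and verifying that what survives still assembles into a (one-sided) split Verdier sequence in $\Catperf$. The crux is that $\ilowerstar$ and $\jlowersharp$ are extensions by the zero object rather than honest Kan extensions, so they stay inside $\Ecal^{\upomega}$, which is closed only under finite colimits and retracts; this is also exactly why the resulting Verdier sequence is split on just one side and not two. Parts (2) and (3) are then formal.
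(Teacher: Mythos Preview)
Your argument is correct and follows essentially the same route as the paper's proof: identify that the four functors $\iupperstar$, $\jupperstar$, $\ilowerstar$, $\jlowersharp$ restrict to $\Ecal^{\upomega}$-valued functors (the latter two via the explicit extension-by-zero formulas of \Cref{lem:existence_of_recollements_for_functors_out_of_a_layered_category}), conclude that the resulting sequence is a one-sided split Verdier sequence, and then read off (2) and (3) formally, the latter by the same Krull-dimension induction as in \Cref{cor:splitting_for_functors}. Your write-up is somewhat more explicit than the paper's in a few places (e.g.\ spelling out why the sequence is Verdier and why $(L(\iupperstar),L(\jupperstar))$ inverts $(L(\ilowerstar),L(\jlowersharp))$), but there is no substantive difference in approach.
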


\begin{proof}
	For the first item, note that $ \Fun(\Ccal,\Ecal^\upomega) $ is a full subcategory of $ \Fun(\Ccal,\Ecal) $ which is characterized by a pointwise condition. 
	In particular, the restriction functors $\iupperstar$ and $\jupperstar$ preserve these full subcategories. 
	Moreover, by the formulas in items \eqref{lem:existence_of_recollements_for_functors_out_of_a_layered_category.2} and \eqref{lem:existence_of_recollements_for_functors_out_of_a_layered_category.3} of \Cref{lem:existence_of_recollements_for_functors_out_of_a_layered_category}, the pushforward functors $ \ilowerstar $ and $ \jlowersharp $ also preserve these full subcategories. 
	It follows that we have the desired adjunctions.
	Because $ \jupperstar $ has a fully faithful left adjoint $ \jlowersharp $, and $i_*$ is precisely the inclusion of the kernel of $\jupperstar$, this is a split Verdier sequence.

	The second item follows immediately from the fact that we have a split Verdier sequence. 
	Finally, the third item is proven by induction on the Krull dimension of $ P $ exactly as in the proof of \cref{cor:splitting_for_functors}; we omit the details.
\end{proof}

\begin{corollary} \label{cor:splitting_constructible_small_category_finite_poset}
	Let $(\Xcal,P)$ be an exodromic stratified \topos where $ P $ is a finite poset. 
	Then the maps $L(\iupperstar_p)$ induce a natural equivalence
	\begin{equation*}
		\ConsP(\Xcal,\Ecal^\upomega) \equivalent \bigoplus_{ p \in P } L(\Fun(\Piinfty(\Xcal_p),\Ecal^\upomega)) \period
	\end{equation*}
\end{corollary}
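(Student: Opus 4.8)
The plan is to deduce this from \Cref{lem:splitting_for_small_categories} in exactly the way \Cref{cor:splitting_for_exodromic_stratified_topoi} was deduced from \Cref{cor:splitting_for_functors}. First I would invoke the exodromy equivalence with $\Ecal$-coefficients, together with the fact recalled above that it restricts to an equivalence $\ConsP(\Xcal;\Ecal^\upomega) \equivalent \Fun(\Piinfty(\Xcal,P),\Ecal^\upomega)$ between the full subcategory of constructible sheaves with compact stalks and the full subcategory of functors valued in $\Ecal^\upomega$. By the functoriality of exodromy applied to the inclusion of the $p$-th stratum $(\Xcal_p,\{p\})$ into $(\Xcal,P)$, this equivalence is compatible with $\ast$-pullback: it identifies $\iupperstar_p \colon \ConsP(\Xcal;\Ecal^\upomega) \to \LC(\Xcal_p;\Ecal^\upomega)$ with precomposition along the inclusion of the fiber $\Piinfty(\Xcal,P)_p \inclusion \Piinfty(\Xcal,P)$, and the natural equivalence $\Piinfty(\Xcal,P)_p \equivalent \Piinfty(\Xcal_p)$ identifies the target with $\Fun(\Piinfty(\Xcal_p),\Ecal^\upomega)$.

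Next I would apply \Cref{lem:splitting_for_small_categories} to the conservative functor $s \colon \Piinfty(\Xcal,P) \to P$. Since $s$ is conservative its fibers $\Ccal_p$ are the anima $\Piinfty(\Xcal,P)_p \equivalent \Piinfty(\Xcal_p)$, so the output of the lemma matches the target of the corollary; and since $P$ is finite, the third part of \Cref{lem:splitting_for_small_categories} gives that the maps $L(\iupperstar_p)$ induce a natural equivalence $L(\Fun(\Piinfty(\Xcal,P),\Ecal^\upomega)) \equivalent \Directsum_{p \in P} L(\Fun(\Piinfty(\Xcal_p),\Ecal^\upomega))$. Transporting this along the exodromy identification of the previous paragraph then yields the claimed equivalence for $L(\ConsP(\Xcal;\Ecal^\upomega))$, with the splitting maps being exactly the $L(\iupperstar_p)$.

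I do not expect a genuine obstacle here: all the real content has been isolated into \Cref{lem:splitting_for_small_categories}, and everything else is transport along the exodromy equivalence, whose compatibility with the relevant $\ast$-pullback functors and with passing to locally closed subposets is part of the package recalled in \Cref{subsec:exodromy}. The only points worth a moment's care are that $\Fun(\Ccal,\Ecal^\upomega)$ and each $\Fun(\Ccal_p,\Ecal^\upomega)$ genuinely lie in $\Catperf$ (small, stable, and idempotent complete, the last because $\Ecal^\upomega$ is), so that $L$ may be applied, and that the target $\LC(\Xcal_p;\Ecal^\upomega)$ of $\iupperstar_p$ is correctly identified --- equivalently, that the exodromy equivalence for the stratum $(\Xcal_p,\{p\})$ agrees with the one for $(\Xcal,P)$ under restriction, which is again an instance of the functoriality of exodromy.
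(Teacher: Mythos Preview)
Your proposal is correct and takes essentially the same approach as the paper, which simply records that the claim is ``Immediate from \Cref{lem:splitting_for_small_categories}''; you have just made explicit the exodromy identifications that are being silently used. The only additional care you take (compatibility with $\iupperstar_p$, membership in $\Catperf$) is appropriate and does not diverge from the intended argument.
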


\begin{proof}
	Immediate from \cref{lem:splitting_for_small_categories}.
\end{proof}

\begin{remark}
	We discuss here the subtle differences in the \categories appearing in this subsection and the previous one. 
	Fix a stratified topological space $(X,P)$ which is hypercomplete as well as a compactly generated stable \category $\Ecal$.
	\begin{enumerate}		
		\item In general, the \category $ \ConsP(X;\Ecal^\upomega) $ is not (on the nose) the same as $\ConsP(X;\Ecal)^\upomega$, though they are both full subcategories of $\ConsP(X;\Ecal)$: the latter is defined to be the subcategory of compact objects in the presentable \category $\ConsP(X;\Ecal)$. 
		Hence we don't know if it is possible to deduce the statements for small \categories from the statements for large \categories or vice versa. 
		It would be very interesting to formulate finiteness conditions (as those in \cite{arXiv:2412.04745}) on the exit-path \category to ensure that these two \categories agree.
		
		\item The following example is taken from \cite[\href{https://arxiv.org/pdf/1604.00114\#thm.3.19}{Example 3.19}]{arXiv:1604.00114}. 
		In this case there is a natural inclusion from one \category to the other, but it is not an equivalence. 
		Consider the topological circle $ \Sup^1$ equipped with the trivial stratification (so constructible sheaves are locally constant shaves), and let the coefficient \category $ \Ecal $ be the derived \category $ \Mod_\CC $ of $\CC$-vector spaces. 
		Now both \categories can be considered as subcategories of the large \category of modules over the group ring $ \CC[\Omega S^1] = \CC[\ZZ] $.
		\begin{enumerate}
			\item[\textbullet] The category $\LC(\Sup^1;\Mod_\CC^\upomega)$ is the subcategory of objects whose underlying complexes of $\CC$-vector spaces are perfect.

			\item[\textbullet] The \category $\LC(\Sup^1;\Mod_\CC)^\upomega$ is the subcategory of perfect complexes of $\CC[\ZZ]$-modules.
		\end{enumerate}
 		Note that in this case, there is an inclusion $\LC(\Sup^1;\Mod_\CC^\upomega)\subset\LC(\Sup^1;\Mod_\CC)^\upomega$. 
 		However, the inclusion map induces the zero map on the Grothendieck group $\Kup_0$.

 		\item When the poset $ P $ is infinite, we don't have a general statement about localizing invariants of $\ConsP(X;\Ecal^\upomega)$, or equivalently $\Fun(\Piinfty(X,P),\Ecal^\upomega)$. 
 		One can write this \category as an cofiltered limit in $\Catperf$:
 		\begin{equation*}
 			\Fun(\Piinfty(X,P),\Ecal^\upomega)\equivalent{\lim_{Q \in \Subfin(P)^{\op}} \Fun(\Piinfty(X,P)_Q,\Ecal^\upomega)}\period
 		\end{equation*}	
 		But we don't know if we can commute localizing invariants past such cofiltered limits.
 		In general, this is a delicate problem; let us mention that Efmiov \cite{arXiv:2502.04123} has studied this in the setting of dualizable \categories. 

 		In the other direction, if we assume the poset $ P $ can be filtered by a sequence of closed (or open) subsets, then we can `renormalize' the \category of constructible sheaves and describe its localizing invariants, as follows.
 		(This is a common practice in applications, see for example \cite[\href{https://arxiv.org/pdf/math/0202150\#subsection.2.2}{\S2.2}]{MR2142332}.)
 	\end{enumerate}
\end{remark}

\begin{notation}\label{ntn:renormalized_Cons}
	Let $(\Xcal,P)$ be an exodromic stratified \topos. 
	Let $ \Zcal $ be a filtered family of \textit{finite} closed subposets of $ P $ such that $ \Union_{Z \in \Zcal} Z = P $.
	We define the subcategory of constructible sheaves valued in $\Ecal^\upomega$ which are $ * $-extended from some $ \Xcal_{Z} $ as
	\begin{equation*}
		\ConsP^{\Zren}(\Xcal;\Ecal^\upomega) \colonequals \colim_{Z \in \Zcal} \Cons_{Z}(\Xcal_{Z};\Ecal^\upomega) \period
	\end{equation*}
	The colimit is formed along the $ * $-pushforward functors.
\end{notation}

\begin{lemma}
	In the setting of \Cref{ntn:renormalized_Cons}, if $ L $ is a finitary localizing invariant, then
	\begin{equation*}
		L(\ConsP^{\Zren}(\Xcal;\Ecal^\upomega)) \equivalent \bigoplus_{ p \in P } L(\Fun(\Piinfty(\Xcal_p),\Ecal^\upomega)) \period
	\end{equation*}
\end{lemma}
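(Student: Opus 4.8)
The plan is to commute the finitary invariant $L$ past the filtered colimit defining $\ConsP^{\Zren}(\Xcal;\Ecal^\upomega)$, apply the finite-poset splitting \Cref{lem:splitting_for_small_categories} to each finite $Z \in \Zcal$, and then identify the resulting colimit of finite direct sums with the full direct sum over $P$.

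First, by \Cref{ntn:renormalized_Cons} the \category $\ConsP^{\Zren}(\Xcal;\Ecal^\upomega)$ is the filtered colimit in $\Catperf$ of the \categories $\Cons_Z(\Xcal_Z;\Ecal^\upomega)$, $Z \in \Zcal$, along the $\ast$-pushforward functors. Since $L$ is finitary this gives a natural equivalence
\begin{equation*}
	L(\ConsP^{\Zren}(\Xcal;\Ecal^\upomega)) \equivalent \colim_{Z \in \Zcal} L(\Cons_Z(\Xcal_Z;\Ecal^\upomega)) \period
\end{equation*}
For each $Z \in \Zcal$ the poset $Z$ is finite and, by the exodromy recollection of \Cref{subsec:exodromy}, the stratified \topos $(\Xcal_Z,Z)$ is again exodromic, with a restricted exodromy equivalence $\Cons_Z(\Xcal_Z;\Ecal^\upomega) \equivalent \Fun(\Piinfty(\Xcal_Z,Z),\Ecal^\upomega)$ and $\Piinfty(\Xcal_Z,Z)_p \equivalent \Piinfty(\Xcal_p)$ for each $p \in Z$. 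Applying \Cref{lem:splitting_for_small_categories} to the functor $\Piinfty(\Xcal_Z,Z) \to Z$ therefore yields a natural equivalence
\begin{equation*}
	L(\Cons_Z(\Xcal_Z;\Ecal^\upomega)) \equivalent \Directsum_{p \in Z} L(\Fun(\Piinfty(\Xcal_p),\Ecal^\upomega))
\end{equation*}
implemented by the restriction maps $L(\iupperstar_p)$, $p \in Z$.

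Next I would identify, under these equivalences, the transition map attached to an inclusion $Z \subseteq Z'$ in $\Zcal$ with the canonical inclusion of summands $\Directsum_{p\in Z}(-) \hookrightarrow \Directsum_{p\in Z'}(-)$. By functoriality of the exodromy equivalence, the $\ast$-pushforward along the closed immersion $\Xcal_Z \hookrightarrow \Xcal_{Z'}$ corresponds to the functor $\ilowerstar$ of \Cref{lem:existence_of_recollements_for_functors_out_of_a_layered_category}, taken with ambient \category $\Piinfty(\Xcal_{Z'},Z')$ and closed subposet $Z$; concretely, $\ilowerstar$ extends a functor by the zero object of $\Ecal^\upomega$ on the fibers over $Z' \sminus Z$. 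Restricting to the fiber over a point $p$ and using the explicit formula, one gets $\iupperstar_p \of \ilowerstar \equivalent \iupperstar_p$ when $p \in Z$, and $\iupperstar_p \of \ilowerstar \equivalent 0$ when $p \in Z' \sminus Z$ (the comma category computing the pointwise right Kan extension over a fiber above $Z' \sminus Z$ is empty because $Z$ is downward closed). Tracing these identities through the equivalences above shows that the transition map is indeed the summand inclusion.

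Finally, $\Zcal$ is filtered with $\Union_{Z \in \Zcal} Z = P$, so each $p \in P$ lies in all sufficiently large $Z$; since direct sums commute with filtered colimits in the target of $L$, we obtain
\begin{equation*}
	\colim_{Z \in \Zcal} \Directsum_{p \in Z} L(\Fun(\Piinfty(\Xcal_p),\Ecal^\upomega)) \equivalent \Directsum_{p \in P} L(\Fun(\Piinfty(\Xcal_p),\Ecal^\upomega)) \period
\end{equation*}
Concatenating the three displayed equivalences proves the lemma. I expect the only genuine work to be the middle step: matching the $\ast$-pushforward transition maps with the functors $\ilowerstar$ through the functoriality of exodromy, and the fiberwise computation of $\iupperstar_p \of \ilowerstar$ — in particular the vanishing for $p \notin Z$, which is exactly where downward-closedness of $Z$ is used. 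The rest is routine bookkeeping with filtered colimits and direct sums.
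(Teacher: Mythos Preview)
Your proposal is correct and follows the same approach as the paper: use finitaryness of $L$ to commute past the filtered colimit and then invoke the finite-poset splitting (\Cref{cor:splitting_constructible_small_category_finite_poset}, which is immediate from \Cref{lem:splitting_for_small_categories}). The paper's proof is a single sentence and leaves implicit the bookkeeping you spell out—identifying the transition maps with summand inclusions via the explicit formula for $\ilowerstar$ and the resulting fiberwise vanishing for $p \in Z' \sminus Z$—so your write-up is a faithful expansion of the same argument.
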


\begin{proof}
	Since $L$ is finitary, the statement reduces to the case of finite poset (\Cref{cor:splitting_constructible_small_category_finite_poset}).
\end{proof}

%-------------------------------------------------------------------%
%-------------------------------------------------------------------%
%  References                                                       %
%-------------------------------------------------------------------%
%-------------------------------------------------------------------%

\DeclareFieldFormat{labelnumberwidth}{#1}
\printbibliography[keyword=alph, heading=references]
\DeclareFieldFormat{labelnumberwidth}{{#1\adddot\midsentence}}
\printbibliography[heading=none, notkeyword=alph]

\end{document}